\documentclass[11pt]{amsart}
\usepackage[utf8]{inputenc}
\usepackage[english]{babel}
\usepackage[a4paper,top=4cm,bottom=4cm,left=4cm,right=4cm]{geometry}
\usepackage{amstext}
\usepackage{latexsym}
\usepackage{amsmath}
\usepackage{amssymb}
\usepackage{amsfonts}
\usepackage{graphicx}
\usepackage{amssymb}
\usepackage{amsthm}
\usepackage{stackrel}
\usepackage{epsfig}
\usepackage{tikz}
\usetikzlibrary{matrix,arrows,decorations.pathmorphing}
\usepackage{graphicx}
\usepackage{enumitem}
\usepackage{xcolor}
\usepackage[misc]{ifsym}
\usepackage{hyperref}
\usepackage{comment}

\theoremstyle{definition}
\newtheorem{defn}{Definition}[section]
\newtheorem{rem}[defn]{Remark}

\theoremstyle{plain}

\newtheorem{coro}[defn]{Corollary}
\newtheorem{lemma}[defn]{Lemma}
\newtheorem{prop}[defn]{Proposition}

\newtheorem{maintheorem}{Theorem}

\newcommand{\Id}{\operatorname{Id}}
\newcommand{\Lip}{\operatorname{Lip}}
\newcommand\Hol[1]{\mathcal{H}_{#1}}
\newcommand{\dsup}{d_{\infty}}
\newcommand\restr[2]{{ 
  \left.\kern-\nulldelimiterspace 
  #1 
  \vphantom{\big|} 
  \right|_{#2} 
  }}

\begin{document}

\title[Cohomology of Lipschitz-valued cocycles]{Cohomology of Lipschitz-valued cocycles}

\subjclass{Primary: 37C15; Secondary: 37D30, 37H05.}
\keywords{Cocycles, classification of cocycles, Liv\v{s}ic theorem, cohomology.}

\author{Marisa Cantarino}
\address{School of Mathematical Sciences, Monash University, Clayton, VIC 3800, Australia}

\author{Catalina Freijo}
\email{marisa.cantarino@monash.edu }
\address{Instituto de Matematica e Estatstica, Universidade de Sao Paulo, Brazil}
\email{catalinafreijo@ime.usp.br }

\thanks{M. C. was partially financed by the Australian Research Council (ARC). C. F. was financed, in part, by the São Paulo Research Foundation (FAPESP), Brasil. We thank Aline Melo for several helpful discussions.}

\begin{abstract}
    We consider the set of H\"older continuous cocycles over a finite shift acting on a group of Lipschitz homeomorphisms $\Lip(G)$, where $G$ is a metrisable compact topological group. We establish that two dominated cocycles that coincide over periodic points of the base are cohomologous, with the conjugacy being H\"older continuous. Moreover, we prove that, under an additional condition, the existence of a measurable conjugacy implies the existence of a H\"older continuous conjugacy between the cocycles.
\end{abstract}

\maketitle	

\section{Introduction}

The notion of \textit{cocycle} in dynamical systems is defined by a map on a fibre bundle that acts on the fibres following a cocycle condition for compositions. More specifically, given a $\mathbb{Z}$-action $T: \mathbb{Z} \times X \to X$ and a topological group $H$, a \textit{cocycle} over $T$ with values in $H$ is given by $F: \mathbb{Z} \times X \to H$ satisfying 
$$F^{n+m}_x = F^m_{T^n(x)} F^n_x,$$
where $x \in X$, $n, m \in \mathbb{Z}$. We may consider $T$ the action of a more general group, but for most applications in dynamical systems $T$ is a homeomorphism on $X$.

A fundamental cocycle naturally arising in smooth dynamical systems is the Jacobian cocycle. This can be generalised to the broader framework of linear cocycles, where the fibre is a vector space $H$. Over the past decades, linear cocycles have been extensively studied, with applications to differential and measure-theoretic properties of both deterministic and random dynamical systems. Given their importance, a central problem is to classify such cocycles up to natural equivalence relations.

Two cocycles \( F \) and \( G \) are called \textit{cohomologous} if there exists a \textit{transfer function} (or \textit{conjugacy}) \( \phi \colon X \to H \) such that  
\[  
F^n_x = \phi(T^n(x)) \, G^n_x \, \phi(x)^{-1}  
\]  
for all \( x \in X \) and \( n \in \mathbb{Z} \). The function \( \phi \) is typically required to preserve the structure of \( X \) and \( H \), meaning it is at least continuous or measurable, depending on the context.  

A special case arises when \( G^n_x \equiv \text{Id}_H \). If \( F \) is cohomologous to the trivial cocycle, then \( F \) is called a \textit{coboundary}.  

The problem of determining whether two dynamical systems are conjugate was initially studied in the context of coboundaries. Liv\v{s}ic proved in \cite{Liv71, Liv72} that, if $T: X \to X$ is hyperbolic and $H$ is a topological group admitting a bi-invariant complete metric, then $F$ is a coboundary if and only if $F^n_x = \Id_H$ for all $n$-periodic $x \in X$. This means that the property of being a coboundary is entirely characterised by the periodic points.

For the linear case, when $H = \text{GL}(m, \mathbb{R}) $, the work done in \cite{Kalinin2011} established a general result for characterising when a cocycle is a coboundary. This work was later extended to diffeomorphism groups in low dimensions by \cite{KP16} and extended in \cite{AKL18} for higher dimensions. Previous developments addressed this question in various other settings, including connected Lie groups \cite{PollicottWalkden2001}, general Lie groups \cite{ddLW2010}, diffeomorphism groups \cite{NT95}, and Banach algebras \cite{BN98} --- though these cases all require technical conditions to control the behaviour of the cocycle iterates.

It is worth to mention some results in the literature that prove Liv\v{s}ic-like theorems with weaker hyperbolicity hypothesis for $T$, such as \cite{Wilk13, BP19, ZC2021}.

In the aforementioned works, the central question addressed is whether a given cocycle is cohomologous to the identity. To generalise this problem and study the conjugacy between two arbitrary cocycles, we introduce two key conditions. First, we say that \( F \) and \( G \) have \textit{coincidence of periodic data} if  
\[  
F^n_x = G^n_x \quad \text{for every } n\text{-periodic point } x \in X.  
\]  
This condition ensures that the two cocycles agree along periodic orbits, providing a necessary compatibility for their conjugacy. Second, the \textit{domination condition} (see Definition \ref{def:dom}) ensures that the cocycle defines an invariant foliation of \(\Omega \times G\) by stable and unstable sets. This foliation, in turn, gives rise to a canonical mapping between transverse sections --- the \textit{holonomy} --- which plays a fundamental role in the construction of the conjugacy between \( F \) and \( G \).  

Remarkably, in many settings, these two conditions suffice to guarantee that \( F \) and \( G \) are cohomologous. Key results in this direction can be found in \cite{PollicottWalkden2001, Backes2015, Sadovskaya, BackesKocsard}, where this phenomenon is established across different dynamical contexts.  

\subsection{Results}

In this work, we study a shift map \(\sigma \colon \Omega \to \Omega\) and a cocycle \(F\) over \(\sigma\) taking values in \((\text{Lip}(G), \circ)\), where \(G\) is a compact metrisable topological group. The space \(\text{Lip}(G)\) is equipped with the Lipschitz metric \(d_1\), and the relevant definitions are recalled in Section \ref{sec:preli} for the reader’s convenience. 

\begin{maintheorem}
    \label{teo:A}
    Consider $(G, +)$ a compact topological group with a metric $d$, and $f, g \in \Hol{\alpha}( {\Omega},\operatorname{Lip}(G))$.

    If $f$ and $g$ induce su-dominated cocycles over a shift map $\sigma: \Omega \to \Omega$ and satisfy that, for every $n$-periodic point $x_0\in\Omega$,
        $$f^n_{x_0} = g^n_{x_0},$$
    then there exists a $\beta$-H\"older function $\phi: \Omega \to (\Hol{\beta}(G), \dsup)$, $\beta < \alpha^2$, such that
        $$f_{x}=\phi_{\sigma(x)} g_{x} \phi^{-1}_{x},$$
    for all $x \in \Omega$.
\end{maintheorem}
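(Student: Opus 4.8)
The plan is to construct the transfer function $\phi$ from the stable and unstable holonomies of the two cocycles, seeding the construction on periodic orbits, where $f$ and $g$ are forced to agree, and then propagating it by holonomies to a dense set. \emph{Step 1 (holonomies).} From the su-domination hypothesis (Definition~\ref{def:dom}) one extracts, for each of $f$ and $g$, families of local stable holonomies $H^{s,f}_{x,y}$, defined for $y$ in the local stable set of $x$, and local unstable holonomies $H^{u,f}_{x,y}$, each a homeomorphism of $G$ belonging to some fixed class $\Hol{\gamma}(G)$. They arise as the limits $H^{s,f}_{x,y}=\lim_{n\to\infty}(f^n_y)^{-1}\circ f^n_x$ (with the analogous past limits for the unstable ones), whose convergence is exactly what the domination condition buys; they satisfy $H^{s,f}_{x,x}=\Id$, the composition rule $H^{s,f}_{y,z}\circ H^{s,f}_{x,y}=H^{s,f}_{x,z}$, the equivariance $f_y\circ H^{s,f}_{x,y}=H^{s,f}_{\sigma(x),\sigma(y)}\circ f_x$, and a Hölder dependence on $x,y$; likewise along unstable sets and for $g$. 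I expect this to be packaged as a lemma in the preliminary sections.

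\emph{Step 2 (seeding and propagation).} On each periodic orbit, fix a base point $p$ of period $m$, set $\phi_p:=\Id_G$, and put $\phi_{\sigma^j(p)}:=f^j_{p}\circ(g^j_{p})^{-1}$ for $0\le j<m$; this is consistent around the orbit precisely because $f^m_{p}=g^m_{p}$, i.e.\ by coincidence of periodic data, and it already realizes $f_x=\phi_{\sigma(x)}\circ g_x\circ\phi_x^{-1}$ at periodic $x$. For a general $x\in\Omega$, choose a periodic point $p$ agreeing with $x$ on a long central block; the corner point $z$, equal to $x$ in the future and to $p$ in the past, lies in $W^s_{\mathrm{loc}}(x)\cap W^u_{\mathrm{loc}}(p)$, and one \emph{defines}
\[
\phi_x:=H^{s,f}_{z,x}\circ H^{u,f}_{p,z}\circ\phi_{p}\circ H^{u,g}_{z,p}\circ H^{s,g}_{x,z}.
\]
A computation using the equivariance of the holonomies (and their behaviour along the orbit of $p$, where $\phi$ is already defined) shows that this $\phi$ satisfies the cohomology equation $f_x=\phi_{\sigma(x)}\circ g_x\circ\phi_x^{-1}$ for every such $x$, so the remaining task is purely one of well-definedness and regularity.

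\emph{Step 3 (well-definedness and regularity — the main point).} One must check that $\phi_x$ above is independent of the approximating periodic orbit and of the central block, and that it extends to a $\beta$-Hölder map $\Omega\to(\Hol{\beta}(G),\dsup)$ for every $\beta<\alpha^2$. The mechanism is a Liv\v{s}ic-type closing argument: two candidate values coming from periodic points $p,p'$ are compared by inserting the holonomies linking $p$ to $p'$, and the discrepancy is controlled through the Hölder moduli of $f$ and $g$ together with the equality of periodic data along the closed orbit shadowing the relevant segment, which forces a geometric decay in the length of the central block. The same estimates, applied to two nearby points joined to a common nearby periodic orbit, bound the Hölder modulus of $\phi$; tracking the exponents — the holonomies are essentially $\alpha$-Hölder in the base and stacking a stable leg on an unstable leg together with the closing estimate costs a further factor $\alpha$ — produces the range $\beta<\alpha^2$, the target space being $\Hol{\beta}(G)$ rather than $\Lip(G)$ because holonomies are in general only Hölder homeomorphisms. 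The cohomology equation then passes from this dense set to all of $\Omega$ by continuity of both sides.

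The step I expect to be the genuine obstacle is Step 3, and within it the running of the closing/Liv\v{s}ic estimates in the purely nonlinear setting of $\Lip(G)$: there is no subtraction, so every comparison must be made by left/right composition and measured in the metrics $d_1$ and $\dsup$, and one must in addition verify that the composite $\phi_x$ is a genuine bi-Hölder homeomorphism of $G$ with moduli uniform in $x$ (so that $\phi$ really takes values in a single $\Hol{\beta}(G)$) and that its inverse is controlled — which is where the compactness and group structure of $G$, and the precise content of Definition~\ref{def:dom}, have to be exploited with care. A secondary delicate point is the exact bookkeeping of the exponent loss across the two holonomy legs and the closing lemma needed to land at $\beta<\alpha^2$.
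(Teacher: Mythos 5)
Your strategy is sound and rests on the same three pillars as the paper's proof (holonomies from su-domination, an Anosov-closing/Liv\v{s}ic argument exploiting the periodic data, and local product structure for the H\"older estimate), but the construction of $\phi$ is organized differently. The paper anchors everything at a \emph{single} periodic point $x_0$ (first reduced to a fixed point by passing to $\sigma^{n_0}$) and defines $\phi_y = h^{s,f}_{x_0y}\circ(h^{s,g}_{x_0y})^{-1}$ only on the dense homoclinic class $W^s(x_0)\cap W^u(x_0)$, using \emph{stable} holonomies alone; the cohomological equation there follows formally from equivariance plus $f_{x_0}=g_{x_0}$, with no well-definedness issue. All of the closing-lemma work is then concentrated in Lemma \ref{lem:1}, which shows that the stable-holonomy and unstable-holonomy transports from $x_0$ agree --- this is precisely where the data at the \emph{other} periodic points (the shadowing orbits $z_n$) is consumed --- and that identity is what feeds the local-product argument of Lemma \ref{lem:phi-lips}. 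Your version, seeding $\phi$ on every periodic orbit and transporting to an arbitrary $x$ through a corner point, front-loads the difficulty into the well-definedness of $\phi_x$ across different periodic anchors; that check is essentially equivalent to Lemma \ref{lem:1} and should go through by the same shadowing estimate, but it is genuinely the whole content of the proof and you have only sketched it. Two smaller points: (i) the exponent loss does not come from ``stacking legs in the base'' but from composing H\"older homeomorphisms \emph{in the fibre} --- $h^{s,f}_{x_0w}\circ h^{s,g}_{yx_0}$ is only $\gamma_f\gamma_g$-H\"older as a map of $G$, while $\dsup(h^s_{yw},\Id)\leq C\,d_\Omega(y,w)^{\alpha}$ by Proposition \ref{prop:stab-manifold-holon} is already full strength in the base; (ii) your $\phi_x$ is a composition of four holonomies rather than two, so its fibre regularity degrades to roughly $\gamma_f^2\gamma_g^2$ --- still consistent with the stated $\beta<\alpha^2$, but weaker than the paper's $\beta=\gamma_f\gamma_g$.
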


On the above result, if $\gamma_{f}, \gamma_{g} < \alpha$ are respectively the H\"older coefficients of the holonomies for $F$ and $G$ (again, see Proposition \ref{prop:stab-manifold-holon}), then $\beta = \gamma_f \gamma_g$.

We can consider a different kind of condition to obtain cohomologous cocycles with a H\"older continuous conjugacy by asking them to have a measurable conjugacy, which provides a rigidity result. For that, su-domination is not enough, as there are counterexamples otherwise for the particular linear case (see \cite{PollicottWalkden2001}). The required conditions is that an arbitrary number of compositions of the cocycle generate a bounded distortion for $\mu$-almost every point $x \in \Omega$, where $\mu$ is a $\sigma$-invariant measure (see Definition \ref{def:asym-quasi-iso}).

\begin{maintheorem}
    \label{teo:B}
    Suppose that we have a $\mu$-measurable conjugacy $\phi: \Omega \to (\Hol{\beta}(G), \dsup)$ between two $\alpha$-H\"older cocyles $F$ and $G$ over $\sigma$, where $\mu$ is an ergodic measure with full support and local product structure, $F$ is su-dominated and $G$ has $\mu$-bounded distortion. Then $\phi$ coincides $\mu$-a.e. with an $\beta\gamma$-H\"older conjugacy $\tilde{\phi}: \Omega \to(\operatorname{Hom}(G), \dsup)$. Here $\gamma <\alpha$ is the H\"older regularity of the holonomies of $F$.
\end{maintheorem}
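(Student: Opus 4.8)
The plan is to prove Theorem~B by the standard Liv\v{s}ic-type measurable rigidity argument adapted to the $\Lip(G)$-valued setting, using su-domination and the holonomies of $F$ to upgrade regularity along stable and unstable sets, and using $\mu$-bounded distortion of $G$ to control the oscillation of $\phi$ along unstable sets. First I would fix a point $p$ in a full-measure set where $\phi$ is defined and where the Lusin-type regularity of $\phi$ holds: by Lusin's theorem there is a compact set $K\subset\Omega$ with $\mu(K)>1-\varepsilon$ on which $\phi$ is uniformly continuous, and since $\mu$ has full support and local product structure, $K$ meets most local stable and unstable sets in large relative measure. The cohomology equation $F_x=\phi_{\sigma(x)}G_x\phi_x^{-1}$ is invariant along orbits, so I would first show that $\phi$ agrees $\mu$-a.e. with a function that is constant along the stable holonomy of $F$ composed appropriately with the data of $G$; concretely, for $y$ in the local stable set of $x$ one iterates forward and uses that $F^n$ contracts stable distances (domination) together with the Lipschitz control on $G$-iterates to conclude $\phi_y = H^s_{x,y}(\phi_x)$ for the appropriate combined stable holonomy, and symmetrically along unstable sets with backward iteration. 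This is where $\mu$-bounded distortion of $G$ enters: without it the backward iterates of $G$ need not have controlled distortion and the unstable-holonomy relation for $\phi$ fails, exactly as in the linear counterexamples cited.

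Next I would define the candidate continuous conjugacy $\tilde\phi$ by declaring it on $K$ (or a full-measure invariant subset) and extending it via the holonomy relations: pick $p$ in a density point of $K$ along both local sets, set $\tilde\phi_p=\phi_p$, and for an arbitrary $x$ near $p$ use the local product structure to write $x$ as the intersection of a local unstable set through a point on the local stable set of $p$; define $\tilde\phi_x$ by applying first the (combined) stable holonomy from $p$ and then the unstable holonomy. The two holonomies are $\gamma$-H\"older in the base (by Proposition~\ref{prop:stab-manifold-holon}) and act on $(\Hol{\beta}(G),\dsup)$, so the composed map $x\mapsto\tilde\phi_x$ is $\beta\gamma$-H\"older into $(\operatorname{Hom}(G),\dsup)$; I would need to check that the resulting $\tilde\phi$ does not depend on auxiliary choices, which follows from the a.e.\ coincidence of the two holonomy-determined versions of $\phi$ on a full-measure set together with continuity. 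Then one verifies $\tilde\phi$ satisfies the cohomology equation everywhere: it holds $\mu$-a.e.\ because $\tilde\phi=\phi$ a.e., and both sides are continuous (the cocycles are $\alpha$-H\"older and $\tilde\phi$ is H\"older), so it holds on the support of $\mu$, which is all of $\Omega$.

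The main obstacle I anticipate is the ``closing-up'' step: showing that the stable-holonomy version and the unstable-holonomy version of $\phi$ agree on a full-measure set, so that the extension $\tilde\phi$ is well-defined and genuinely continuous rather than merely defined on a Cantor-like set. This is the heart of the Liv\v{s}ic measurable rigidity phenomenon and typically requires a careful Fubini/absolute-continuity argument using the local product structure of $\mu$: one shows the set where both versions are defined and equal has full conditional measure along a.e.\ stable and a.e.\ unstable set, then invokes the product structure to conclude it has full measure in a neighbourhood, and finally propagates this by ergodicity. A secondary technical point is ensuring all the holonomy manipulations stay inside $\Hol{\beta}(G)$ with uniformly bounded $\beta$-norms (so that $\dsup$-estimates suffice and the limit lands in $\operatorname{Hom}(G)$), which should follow from the uniform Lipschitz bounds on the cocycle generators $f,g$ and the exponential contraction/expansion from su-domination, combined with the $\mu$-bounded distortion hypothesis on $G$ to keep the unstable-side compositions under control.
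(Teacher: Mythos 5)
Your proposal follows essentially the same route as the paper: establish the a.e.\ holonomy-invariance of $\phi$ (via Lusin, Birkhoff returns, and the bounded distortion of $G$), deduce a $\beta\gamma$-H\"older estimate for $\dsup(\phi_x,\phi_y)$ along stable and unstable sets from the $\gamma$-H\"older holonomies, glue these with the local product structure of $\mu$, and extend from a full-measure dense invariant set to a H\"older $\tilde\phi:\Omega\to(\operatorname{Hom}(G),\dsup)$. The only slight mismatch is your claim that $\mu$-bounded distortion of $G$ is needed only on the unstable side: in the paper it is already used in the stable holonomy relation, to bound $L((g_y^n)^{-1})$ uniformly so that the convergence $\phi_{\sigma^n(y)}^{-1}\phi_{\sigma^n(x)}\to\Id$ (which comes with no rate) survives composition with $(g_y^n)^{-1}$.
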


This concept of bounded distortion appears in \cite{S99}, where the author proves a result similar to Theorem \ref{teo:B} for $H$ a Polish (separable and complete) group with a Lipschitz metric and requiring both $F$ and $G$ to have bounded distortion. Unlike \cite{S99}, we use the metric $d_1$ to get the holonomies for $F$ under the su-domination, which is weaker than bounded distortion. Additionally, his result also does not apply directly to our case because $\Lip(G)$ with the Lipschtiz distance is not separable (see Remark \ref{rem:lip-non-sep}).

As an immediate consequence of the proof of the above theorem, we have the following result.

\begin{coro}
    \label{eq:coro1}
    Under the hypotheses of Theorem \ref{teo:B}, if, in addition, there is a uniform bound for the H\"older constant of $\phi_x$ for all $x \in \Omega$, then the image of $\tilde{\phi}$ belongs to $\Hol{\beta}(G)$.
\end{coro}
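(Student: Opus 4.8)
The plan is entirely soft: I would combine the continuity of $\tilde{\phi}$ provided by Theorem \ref{teo:B} with the density --- guaranteed by the full support of $\mu$ --- of the set on which $\tilde{\phi}$ coincides with $\phi$, and then let the uniform H\"older bound pass to a limit. First I would recall from the proof of Theorem \ref{teo:B} that there is a Borel set $R\subseteq\Omega$ with $\mu(R)=1$ on which $\tilde{\phi}_x=\phi_x$, and that $\tilde{\phi}\colon\Omega\to(\operatorname{Hom}(G),\dsup)$ is continuous. Since $\operatorname{supp}\mu=\Omega$, the complement $\Omega\setminus R$ has empty interior, so $R$ is dense in $\Omega$.

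Next I would set $C:=\sup_{x\in R}\lVert\phi_x\rVert_{\Hol{\beta}}$, which is finite by the additional hypothesis of the corollary, so that $d(\phi_x(s),\phi_x(t))\le C\,d(s,t)^{\beta}$ for all $s,t\in G$ and all $x\in R$. Fixing an arbitrary $x\in\Omega$, I would choose $x_k\in R$ with $x_k\to x$; then $\phi_{x_k}=\tilde{\phi}_{x_k}\to\tilde{\phi}_x$ in $\dsup$ by continuity, hence uniformly on $G$ and in particular pointwise. For fixed $s,t\in G$, continuity of $d$ then gives
\[
d\bigl(\tilde{\phi}_x(s),\tilde{\phi}_x(t)\bigr)=\lim_{k\to\infty}d\bigl(\phi_{x_k}(s),\phi_{x_k}(t)\bigr)\le C\,d(s,t)^{\beta},
\]
so $\tilde{\phi}_x$ is $\beta$-H\"older with constant at most $C$; since it is already a homeomorphism of $G$ by Theorem \ref{teo:B}, it belongs to $\Hol{\beta}(G)$. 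As $x$ is arbitrary, this shows $\tilde{\phi}(\Omega)\subseteq\Hol{\beta}(G)$. If the convention for $\Hol{\beta}(G)$ also demands $\tilde{\phi}_x^{-1}$ to be $\beta$-H\"older, I would apply the same limiting argument to the inverses $\phi_{x_k}^{-1}$, whose H\"older constants are likewise uniformly bounded over $R$.

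I do not expect a genuine obstacle here: the core of the argument is the classical fact that a family of $\beta$-H\"older maps sharing a common H\"older constant is closed under pointwise limits. The only point deserving attention is that the postulated bound be uniform over the entire $\mu$-full-measure domain of $\phi$ --- it is precisely this uniformity, together with the density coming from $\operatorname{supp}\mu=\Omega$, that realises every value $\tilde{\phi}_x$ (including at points outside $R$, where $\tilde{\phi}$ was obtained via holonomies) as a $\dsup$-limit of $\beta$-H\"older maps all with constant $C$.
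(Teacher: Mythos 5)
Your argument is correct and is exactly the one the paper intends: the corollary is stated as an immediate consequence of the proof of Theorem \ref{teo:B}, where $\tilde{\phi}$ is obtained as the continuous extension of $\phi$ from a dense full-measure set, and the uniform $\beta$-H\"older bound passes to the pointwise limit in the standard way. The only detail worth making explicit is the inverse part of membership in $\Hol{\beta}(G)$: $\dsup$-convergence of $\phi_{x_k}$ to $\tilde{\phi}_x$ does not by itself give $\dsup$-convergence of the inverses, but the uniform H\"older bound $C$ on $\phi_{x_k}^{-1}$ yields $d\bigl(\phi_{x_k}^{-1}(p),\tilde{\phi}_x^{-1}(p)\bigr)\le C\,\dsup(\phi_{x_k},\tilde{\phi}_x)^{\beta}$ by evaluating at $\tilde{\phi}_x^{-1}(p)$, so the same limiting argument does apply to the inverses as you propose.
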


The results in this paper hold for cocycles over any Lipschitz transitive hyperbolic homeomorphism on a compact metric space, but for simplicity we state and prove them for bilateral shifts of finite type. Remember that transitive hyperbolic homeomorphisms satisfy a version of Smale's spectral decomposition by \cite{bowen70}, so they admit an extension by a finite shift. The use of the shift allows us to present the inequalities in the proofs in a cleaner way.

\section{Preliminaries}
\label{sec:preli}

On this section, we introduce briefly some concepts, results and notations to be used in the proofs.

\subsection{Shift of finite type}

Consider the set $\{1,...,k\}$ of $k$ \textit{symbols} and  the set of bi-infinite sequences $\Sigma = \{1,...,k\}^{\mathbb{Z}}$ of these symbols. The map $\sigma: \Sigma \to \Sigma$ defined by $ {\sigma}( {x}_n)_{n\in\mathbb{Z}}=( {x}_{n+1})_{n\in\mathbb Z}$ is called the \textit{shift map}, since this function shifts every element of the sequence to the left.

Given $P = (P_{i,j})_{1\leq i,j \leq k}$ a matrix with $P_{i,j} \in \{0,1\}$, we say it is a \textit{transition matrix} if there is no null row. We consider the \textit{symbolic dynamical system} with transition matrix $P$ as the set ${\Omega}\subset \Sigma$ given by
    $$
    \Omega = \{(x_n)_{n\in\mathbb{Z}}: P_{x_n,x_{n+1}} = 1 \; \text{for every} \; n \in \mathbb{Z}\},
    $$
with the restriction of ${\sigma}$ action on it. The system $\sigma: \Omega \to \Omega$ is called a \textit{subshift}, while $\sigma: \Sigma \to \Sigma$. While full shifts have fixed points, given by sequences with the same symbol, that is not true for subshifts. Regardless, subshifts given by transition matrices have periodic points.

For every $\rho>1$, we can define a distance in $ {\Omega}$ by 
    $$
    d_{\rho}( {x}, {y})=\rho^{-N_{ {x}, {y}}},
    $$
where $N_{ {x}, {y}}=\max\{N\geq 0 \; : \; x_n=y_n \text{ for every } |n|<N\}$. Since the topologies given by the different constants $\rho$ are equivalent, from now on we consider $\rho$ fixed and denote this distance as $d_{\Omega}$.

We define the \textit{local stable} and \textit{local unstable} set of a point $x \in \Omega$ as
$$W_{\operatorname{loc}}^s(x) := \{y \in \Omega \; : \; x_n = y_n \mbox{ for all } n \geq 0 \} \mbox{ and }$$
$$W_{\operatorname{loc}}^u(x) := \{y \in \Omega \; : \; x_n = y_n \mbox{ for all } n \leq 0 \}.$$

A $\sigma$-invariant measure $\mu$ on the Borel $\sigma$-algebra of $\Sigma$ has \textit{local product structure} if its restriction to a neighbourhood of any point $x$ is absolutely continuous with respect to the product $\mu_x^u \times \mu_x^s$, where $\mu_x^{u/s}$ is a measure supported on a local unstable/stable set of $x$.

More specifically in the setting of shifts, we can consider 
    $$
    \Omega^{+} = \{(x_n)_{n\in\mathbb{Z}}: P_{x_n,x_{n+1}} = 1 \; \text{for every} \; n \geq 0\}
    $$
    $$
    \Omega^{-} = \{(x_n)_{n\in\mathbb{Z}}: P_{x_n,x_{n+1}} = 1 \; \text{for every} \; n < 0\}.
    $$

Let $P^s: \Omega\to \Omega^{+}$ be the projection onto the positive coordinates and $P^u: \Omega\to \Omega^{-}$ the projection onto the negative coordinates. For every $i\in \{1,...,k\}$, denote $[0;i]=\{x\in \Omega: x_0=i\}$ and $\psi_i$ the inclusion 
    $$\psi_i: P^u([0;i])\times P^s([0;i])\to [0;i].$$

Then, the definition of a measure with product structure in this case is the following.

\begin{defn} Given a ${\sigma}$-invariant measure $\mu$, define $\mu^s=P^s_*\mu$ and $\mu^s=P^s_*\mu$. 
    The measure $\mu$ is said to have \emph{local product structure} if there exists a continuous function $\phi: \Omega\to (0,\infty)$ such that for every $i\in \{1,..., k\}$ and every measurable set $E\subset [0;i]$ we have $$\mu(E)=\int (\chi_E\circ \psi_i)\,\phi\, d\mu^u\times d\mu^s.$$ 
\end{defn}

\subsection{Cocycles with values in Lipschitz homeomorphisms}

In this work, we consider compact topological groups with a metric $d$. Examples of such groups include $\mathbb{S}^1$, $\mathbb{T}^n$ or any compact Lie group. The set $\text{Hom}(G)$ denote the collection of homeomorphisms of a topological group $G$, and we define $\Hol{\beta}(G) \subset \text{Hom}(G)$ as the subset of $\beta$-H\"older maps whose inverses are also $\beta$-H\"older. We then consider this set $\Hol{\beta}(G)$ equipped with the metric
    $$
    d_{\max}(f_1, f_2) = \max\{d_{\beta}(f_1, f_2), d_{\beta}(f_1^{-1}, f_2^{-1})\},
    $$
where $d_{\beta}$ is the usual H\"older distance. This means, 
    $$d_{\beta}( {f}_1, {f}_2) = d_\infty(f_1, f_2) + H( {f}_1-{f}_2),$$
where $d_\infty(f_1, f_2) = \sup_{ {p}\in G} d( {f}_1( {p}), {f}_2( {p}))$ and $H( {f})$ denote the H\"older constant of $f$, satisfying $d(f(p), f(q)) \leq H(f) d(p,q)^\beta$.

For the particular case where $\beta = 1$, $d_1$ is called the \textit{Lipschitz metric} on $\Lip(G)$.

\begin{rem}
    \label{rem:lip-non-sep}
    The space of Lipschitz homeomorphisms with the metric $d_1$ is not separable. Indeed, take the uncountable family $\{f_b \}_{b \in (0, \frac{1}{2})} \subset \Lip(\mathbb{S}^1)$ of functions defined as
        $$f_b(p) =
        \begin{cases}
            \frac{3}{2}p & \text{if } 0 < p < b \\
            \frac{1}{2}p + b & \text{if } b < p < \frac{1}{2} \\
            \frac{3-4b}{2}p - \frac{1-4b}{2} & \text{if } \frac{1}{2} < p < 1
        \end{cases}.$$
    Then $L(f_b - f_{b'}) > \frac{1}{2}$ for all $b \neq b'$, and there could not be a countable dense set.
\end{rem}

The following result is easily checked from the definitions.

\begin{lemma}
    \label{lem:d1}
    Consider $f, g$ and $h \in Lip(G)$ and $L(f)$ the Lipschitz constant for $f$. We have that
    \begin{enumerate}
        \item $d_\infty(g \circ f, h \circ f) = d_\infty(g, h)$;
        \item $d_\infty(f \circ g, f \circ h) \leq L(f) d_\infty(g, h)$;
        \item $L(g \circ f) \leq L(g) L(h)$.
    \end{enumerate}
\end{lemma}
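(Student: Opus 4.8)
\emph{Proof proposal.} The plan is to verify each of the three assertions directly from the definitions of $d_\infty$ and of the Lipschitz constant $L$, using only that every element of $\operatorname{Lip}(G)$ is a bijective bi-Lipschitz self-map of $G$.

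For item (1), I would unfold $d_\infty(g\circ f, h\circ f)=\sup_{p\in G} d\bigl(g(f(p)),h(f(p))\bigr)$ and use that $f$ is a homeomorphism of $G$: the assignment $p\mapsto f(p)$ is a bijection of $G$, so $q:=f(p)$ runs over all of $G$ as $p$ does, whence the supremum equals $\sup_{q\in G} d\bigl(g(q),h(q)\bigr)=d_\infty(g,h)$. The only property used here is the surjectivity of $f$. For item (2), I would argue pointwise: fixing $p\in G$, the defining property of the Lipschitz constant gives $d\bigl(f(g(p)),f(h(p))\bigr)\le L(f)\,d\bigl(g(p),h(p)\bigr)\le L(f)\,d_\infty(g,h)$, and taking the supremum over $p\in G$ yields $d_\infty(f\circ g,f\circ h)\le L(f)\,d_\infty(g,h)$.

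For item (3) --- where the intended inequality is $L(g\circ f)\le L(g)\,L(f)$, the $L(h)$ on the right-hand side being an apparent typo --- I would simply chain the two Lipschitz bounds: for all $p,q\in G$,
\[
d\bigl(g(f(p)),g(f(q))\bigr)\le L(g)\,d\bigl(f(p),f(q)\bigr)\le L(g)\,L(f)\,d(p,q),
\]
so $g\circ f$ is $L(g)L(f)$-Lipschitz and hence $L(g\circ f)\le L(g)L(f)$.

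There is no genuine obstacle here: this is a bookkeeping lemma and every step is immediate from the definitions. The only subtlety worth recording is that item (1) really does require $f$ to be onto, so the equality (as opposed to an inequality) would fail for a non-surjective Lipschitz map; within $\operatorname{Lip}(G)$ this is automatic.
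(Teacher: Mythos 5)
Your proof is correct and follows exactly the direct verification from the definitions that the paper intends (it offers no written proof, calling the lemma "easily checked from the definitions"). You are also right that the $L(h)$ in item (3) is a typo for $L(f)$, and your observation that equality in item (1) genuinely requires surjectivity of $f$ is a worthwhile remark.
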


Notice that item (3) above does not hold for H\"older functions, since the composition of two $\beta$-H\"older functions is only $\beta^2$-H\"older.

We denote by $\Hol{\alpha}( {\Omega},\Lip(G))$ the set of $\alpha$-H\"older maps defined from $({\Omega},d_{\Omega})$ to $(\Lip(G),d_1).$

The cocycle given by the action of ${f}\in\Hol{\alpha}( {\Omega},\Lip(G))$ can also be regarded as the skew-product $ {F}_{ {f}}: {\Omega}\times G\to {\Omega}\times G$ defined by $$ {F}_{ {f}}( {x}, {p})=( {\sigma}( {x}), {f}_{ {x}}( {p})).$$ For the rest of the work we use the notation $ {F}$ for the skew-product when there is not needed to specify the map $ {f}$. We may also use both $F$ and $f$ to refer to the cocycle. We refer to $\sigma: \Omega \to \Omega$ as the \textit{base dynamics} of the cocycle, and $f_x: G \to G$ and the \textit{fibre dynamics} on the \textit{fibre} $G$.

Using the notation 
    $$ {f}^n_{ {x}}= {f}_{ {\sigma}^{n-1}( {x})}\circ\ldots\circ {f}_{ {x}}, \ \ \forall x\in\Omega,$$
the iterates of $ {F}$ are given by $ {F}^n( {x}, {p})=( {\sigma}^n( {x}), {f}^n_{ {x}}( {p}))$.

\subsection{Holonomies}

For system with some kind of \textit{partial hyperbolicity}, using graph transform methods, we can prove that there are a continuous family of foliations, as in \cite{HPS}. For cocyles, this partial hyperbolicity is expressed topologically with the property of domination as follows.

\begin{defn}
    \label{def:dom}
    We say that a $\alpha$-H\"older cocycle $F$ given by $f \in \Hol{\alpha}( {\Omega},\Lip(G))$ is \emph{s-dominated} if there exists a constant $\theta > 0$ such that
    \begin{equation}
        \label{eq:sdom}
        L{( {f}_{ {x}}^{ -1})} < \rho^{\alpha -\theta} \text{ for all } {x}\in {\Omega},
    \end{equation}
    where $\rho > 1$ is the constant used to define the product metric for the shift $\sigma$.

    Analogously, $F$ is \emph{u-dominated} if there is $\theta > 0$ such that
    \begin{equation}
        \label{eq:udom}
        L{( {f}_{ {x}})} < \rho^{\alpha -\theta} \text{ for all } {x}\in {\Omega}.
    \end{equation}
    When $ {F}$ satisfies both (\ref{eq:sdom}) and (\ref{eq:udom}) we say that it is su-dominated.
\end{defn}

We have that s-domination implies $L((f^n_x)^{-1}) < \rho^{n(\alpha -\theta)}$, meaning that the Lipschitz constant of $(f^n_x)^{-1}$ grows dominated by the hyperbolicity constant in the base. Domination is an open condition in $\Hol{\alpha}( {\Omega},\Lip(G))$ with the usual $d_{\alpha}$ metric given by $d_{\alpha}(f, g) = \sup d_1(f,g) + H_{\alpha}(f-g)$.

Given a cocycle $F: \Omega \times G \to \Omega \times G$, we can define maps from one fibre $\{x\} \times G$ to another fibre $\{y\} \times G$, provided that base points $x$ and $y$ belong to the same stable set of the base dynamics $\sigma: \Omega \to \Omega$.

\begin{defn}
    \label{def:holonomia}
    A family $h^s_{xy}: \{x\} \times G \to  \{y\} \times G$ with $y\in W^s(x)$ is an \emph{$s$-holonomy} for $F$, if
    \begin{enumerate}
        \item $h^{s}_{yz} \circ h^{s}_{xy} = h^{s}_{xz}$ and $h^{s}_{xx} = id$,
        \item $f_y \circ h^{s}_{xy} = h^{s}_{\sigma(x)\sigma(y)} \circ f_x$,
        \item the map $(x, y, p) \mapsto h^{s}_{xy}(p)$ is continuous,
        \item the map $h^{s}_{xy}$ is H\"older continuous and the H\"older constant and exponent do not depend on $x, y$.
    \end{enumerate}
\end{defn}

The definition for $u$-holonomies is analogous considering $f^{-1}$.

The main reason to consider the property of su-domination in Definition \ref{def:dom} is that it allows us to prove the existence of holonomies.  We remark that the H\"older exponent $\gamma$ of the holonomy depends on the constant $\theta$ from the definition of domination, and it satisfies $\gamma < \alpha$, where $\alpha$ is the H\"older exponent of $f$. More precisely, we have the following result from \cite{AV2010}.

\begin{prop}\cite[Proposition 5.1]{AV2010}
    \label{prop:stab-manifold-holon}
    Let $f \in \Hol\alpha( {\Omega},\Lip(G))$ define a s-dominated cocycle. Then it has a family of $\gamma$-H\"older s-holonomies
        $$h^{s}_{xy}: \{x\} \times G \to  \{y\} \times G, \ \ y\in W^s(x),$$
    with $\gamma < \alpha$, satisfying
        $$h^{s}_{xy}=\lim_{n\to\infty} (f_y^n)^{-1}f_x^n.$$
    Moreover, there is $C >0$ such that
    \begin{equation*}
        \dsup(h^{s}_{xy}, \operatorname{Id}) \leq C d_\Omega(x,y)^{\alpha},
    \end{equation*}
    for all $x, y \in \Omega$.
\end{prop}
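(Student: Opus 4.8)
The plan is to follow the classical graph-transform construction of invariant holonomies, carried out in the fibre group $(\Lip(G),\circ)$. First I would reduce to the case $y\in W_{\operatorname{loc}}^s(x)$: if $y\in W^s(x)$ then $\sigma^N y\in W_{\operatorname{loc}}^s(\sigma^N x)$ for $N$ large, and property~(2) of Definition~\ref{def:holonomia} forces us to set $h^s_{xy}:=(f^N_y)^{-1}\circ h^s_{\sigma^N x,\sigma^N y}\circ f^N_x$, so it suffices to build the holonomy on local stable sets and then check afterwards that this formula does not depend on $N$. On a local stable set the only possible candidate is the one in the statement, $h^s_{xy}:=\lim_{n\to\infty}(f^n_y)^{-1}\circ f^n_x$, so the whole proof reduces to showing that this limit exists, defines a H\"older homeomorphism of $G$, and satisfies (1)--(4).

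For existence, set $H_n:=(f^n_y)^{-1}\circ f^n_x$ and estimate the increments with Lemma~\ref{lem:d1}. Writing $H_{n+1}=(f^n_y)^{-1}\circ\bigl[(f_{\sigma^n y})^{-1}\circ f_{\sigma^n x}\bigr]\circ f^n_x$ and $H_n=(f^n_y)^{-1}\circ\operatorname{Id}\circ f^n_x$, items~(1) and~(2) of the lemma give $d_\infty(H_{n+1},H_n)\le L\bigl((f^n_y)^{-1}\bigr)\,L\bigl((f_{\sigma^n y})^{-1}\bigr)\,d_\infty(f_{\sigma^n x},f_{\sigma^n y})$. Now $s$-domination gives $L((f^n_y)^{-1})<\rho^{n(\alpha-\theta)}$ and $L((f_{\sigma^n y})^{-1})<\rho^{\alpha-\theta}$, and since $y\in W_{\operatorname{loc}}^s(x)$ we have $d_\Omega(\sigma^n x,\sigma^n y)\le\rho^{-n}d_\Omega(x,y)$, so $\alpha$-H\"older continuity of $x\mapsto f_x$ into $(\Lip(G),d_1)$ yields $d_\infty(f_{\sigma^n x},f_{\sigma^n y})\le C_f\,d_\Omega(x,y)^\alpha\rho^{-n\alpha}$. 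Multiplying, the factors $\rho^{n\alpha}$ cancel and $d_\infty(H_{n+1},H_n)\le C_1\,d_\Omega(x,y)^\alpha\,\rho^{-n\theta}$ with $C_1$ independent of $x,y$. As $\theta>0$ this is summable, so $(H_n)$ is uniformly Cauchy in the complete space $(C(G,G),d_\infty)$ (complete because $G$ is compact), whence $h^s_{xy}=\lim_n H_n$ exists and is continuous; summing the geometric series gives $d_\infty(h^s_{xy},\operatorname{Id})\le C\,d_\Omega(x,y)^\alpha$, the displayed inequality, with constants uniform in $(x,y)$, which also gives the joint continuity in property~(3).

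The algebraic axioms are then formal consequences of the limit formula, using that uniform convergence plus compactness of $G$ lets one pass the limit through compositions. Indeed $h^s_{yz}\circ h^s_{xy}=\lim_n(f^n_z)^{-1}f^n_y(f^n_y)^{-1}f^n_x=\lim_n(f^n_z)^{-1}f^n_x=h^s_{xz}$; in particular $h^s_{xx}=\operatorname{Id}$, and taking $z=x$ shows $h^s_{xy}$ is invertible with inverse $h^s_{yx}$, hence a homeomorphism. Property~(2), $f_y\circ h^s_{xy}=h^s_{\sigma x,\sigma y}\circ f_x$, comes from the identities $f_y\circ(f^n_y)^{-1}=(f^{n-1}_{\sigma y})^{-1}$ and $f^{n-1}_{\sigma x}\circ f_x=f^n_x$ after reindexing the limit; and iterating~(2) shows that the extension $h^s_{xy}=(f^N_y)^{-1}\circ h^s_{\sigma^N x,\sigma^N y}\circ f^N_x$ to $W^s(x)$ does not depend on the choice of $N$, so we obtain a genuine $s$-holonomy on all of $W^s$.

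The main obstacle is property~(4): that each $h^s_{xy}$ is $\gamma$-H\"older on $G$ with $\gamma<\alpha$ and with H\"older constant and exponent independent of $x,y$. This is the delicate point precisely because the approximants $H_n$ are Lipschitz but their Lipschitz constants $L(H_n)\le\bigl(\sup_x L(f_x)\cdot\sup_x L(f_x^{-1})\bigr)^n$ may grow, so the limit is genuinely only H\"older. I would obtain it either by the graph-transform route --- realising $(h^s_{xy})$ as the unique fixed point of the operator $\mathcal{T}\colon(h_{xy})\mapsto\bigl((f_y)^{-1}\circ h_{\sigma x,\sigma y}\circ f_x\bigr)$ acting on the closed set of continuous families with a fixed a-priori bound on their $\gamma$-H\"older constants, and checking that for $\gamma$ small enough in terms of $\theta,\alpha,\rho$ this set is $\mathcal{T}$-invariant, since $H_\gamma\bigl((f_y)^{-1}\circ h\circ f_x\bigr)\le L((f_y)^{-1})\,L(f_x)^\gamma\,H_\gamma(h)$ and $s$-domination forces the prefactor below $1$ for suitable $\gamma$ --- or by interpolation, combining $d_\infty(h^s_{xy},H_n)\le C_2\rho^{-n\theta}$ with $d(h^s_{xy}(p),h^s_{xy}(q))\le 2C_2\rho^{-n\theta}+L(H_n)\,d(p,q)$ and optimising over $n\sim\log(1/d(p,q))$. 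Either way one gets a uniform H\"older exponent $\gamma$ expressible through $\theta,\alpha,\rho$; verifying the inequality $\gamma<\alpha$ and the precise form of the constants is the bookkeeping carried out in \cite[Proposition~5.1]{AV2010}, which I would cite rather than reproduce.
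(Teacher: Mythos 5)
The paper offers no proof of this proposition at all --- it is imported wholesale as a citation of \cite[Proposition 5.1]{AV2010} --- so your reconstruction is necessarily a ``different route'' in the sense that it supplies an argument where the paper supplies none. The core of what you wrote is correct and is the standard fibre-bunching argument: the telescoping estimate $d_\infty(H_{n+1},H_n)\le L((f^n_y)^{-1})\,L((f_{\sigma^n y})^{-1})\,d_\infty(f_{\sigma^n x},f_{\sigma^n y})$ is exactly what Lemma \ref{lem:d1} gives, the cancellation $\rho^{n(\alpha-\theta)}\cdot\rho^{-n\alpha}=\rho^{-n\theta}$ is the whole point of s-domination, and summing the tail yields both uniform convergence and the displayed bound $\dsup(h^s_{xy},\Id)\le C\,d_\Omega(x,y)^{\alpha}$ on local stable sets (which is the only regime in which that inequality is meaningful and in which the paper actually uses it, e.g.\ in Lemma \ref{lem:phi-lips}). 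You also correctly isolate the genuinely delicate point, namely property (4): the approximants are Lipschitz with exponentially growing constants, so the limit is only H\"older in the fibre, and some interpolation or invariant-cone argument is needed. Your interpolation variant closes cleanly: with $d_\infty(h^s_{xy},H_n)\le C_2\rho^{-n\theta}$ and $L(H_n)\le\Lambda^n$, optimising $n$ against $d(p,q)$ gives a uniform exponent $\gamma=\theta\log\rho/(\theta\log\rho+\log\Lambda)$, which can always be decreased to lie below $\alpha$.

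One caveat. Your graph-transform alternative, as stated, does not obviously work: you claim that $H_\gamma\bigl((f_y)^{-1}\circ h\circ f_x\bigr)\le L((f_y)^{-1})\,L(f_x)^{\gamma}\,H_\gamma(h)$ and that s-domination ``forces the prefactor below $1$ for suitable $\gamma$.'' But s-domination only gives $L((f_y)^{-1})<\rho^{\alpha-\theta}$, which may well exceed $1$, and shrinking $\gamma$ only shrinks the factor $L(f_x)^{\gamma}$ toward $1$, not the factor $L((f_y)^{-1})$; so the one-step (and likewise the $n$-step) prefactor need not be contracting, and the set of families with a fixed H\"older bound need not be $\mathcal{T}$-invariant. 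This is precisely why the exponent in the correct argument degrades from $\alpha$ to some $\gamma$ depending on $\theta$, $\rho$ and $\sup_x L(f_x)$, rather than being obtainable by a fixed-point argument in a H\"older ball. Since you offer the interpolation argument as an equivalent alternative and it is sound, this does not break the proof, but you should drop or repair the graph-transform phrasing.
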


    
    


\subsection{Bounded distortion}

To prove Theorem \ref{teo:B}, we need a property that is stronger than su-domination. 

\begin{defn}
    \label{def:asym-quasi-iso}
    We say that a cocycle $F$ has \emph{bounded distortion} if there is $K \geq 0$ such that, for all $n \in \mathbb{N}$ and all $x \in \Omega$, we have
        \begin{equation}
            \label{eq:top-qc}
            \begin{aligned}
                L(f_x^n) < K \mbox{ and }\\
                L((f_x^n)^{-1}) < K.                
            \end{aligned}
        \end{equation}
    Given a $\sigma$-invariant measure $\mu$, we say that $F$ has  \emph{$\mu$-bounded distortion} if, instead, the inequalities in (\ref{eq:top-qc}) hold for $\mu$-almost every $x$.
\end{defn}

It is straightforward to see that bounded distortion implies su-do\-mi\-na\-tion. If we consider $\operatorname{Isom(G)} \subseteq \Lip(G)$ the subgroup os isometries, the bounded distortion is trivially satisfied. If $F$ is a projective cocycle acting on $\mathbb{RP}^1 \sim \mathbb{S}^1$ given by the quotient of a quasiconformal linear cocycle in $Gl(2, \mathbb{R})$, then each $f_x$ is an isometry \cite{Kalinin-Sadovskaya}.

\section{Proof of Theorem \ref{teo:A}}

We prove Theorem \ref{teo:A} for $\alpha = 1$, with the general case being obtained with minor adjustments.

The theorem is proved through constructing a conjugacy map. Initially defined on a dense subset of the base, we establish its $\beta$-H\"older continuity within this set. Consequently, this property extends to the closure, ensuring $\beta$-H\"older continuity across the entire domain. 	

\subsection{Case when the shift has a fixed point}
\label{sec:A-fixed}

To construct the conjugacy, we assume first that $x_0$ is a fixed point for $\sigma$. We define the conjugacy $\phi$ within the dense homoclinic class of $x_0$, which is $W^s(x_0)\cap W^u(x_0)$. Thus, let $\phi : W^s(x_0)\cap W^u( x_0)\to \Hol{\beta}(G)$  be defined by 
\begin{equation}
    \label{eq:def-phi}
    \phi_y = h^{s,f}_{ x_0y}(h^{s,g}_{ x_0,y})^{-1},
\end{equation}
where $\beta = \gamma_f \gamma_g < 1$, with $\gamma_{f}$ and $\gamma_g$ the H\"older exponents of the holonomies for $F$ and $G$ respectively. This function satisfies $f_{y} = \phi_{\sigma(y)}g_{y} \phi^{-1}_{y},$ for every $y \in W^s(x_0)\cap W^u(x_0)$.
Indeed, from the holonomies properties we have
\begin{align*} 
    (h^{s,f}_{ x_0\sigma(y)})^{-1}f_yh^{s,f}_{ x_0y} &= f_{ x_0}\\
    (h^{s,g}_{ x_0\sigma(y)})^{-1}g_yh^{s,g}_{ x_0y} &= g_ {x_0}=f_ {x_0}.\\
\end{align*}

By joining both equations, we have that
    $$(h^{s,f}_{ x_0\sigma(y)})^{-1}f_yh^{s,f}_{ x_0y} = (h^{s,g}_{ x_0\sigma(y)})^{-1}g_yh^{s,g}_{ x_0y},$$
thus
$$ f_y = h^{s,f}_{ x_0\sigma(y)}(h^{s,g}_{ x_0\sigma(y)})^{-1} g_y h^{s,g}_{ x_0y}(h^{s,f}_{ x_0y})^{-1},$$
which implies
$$f_y = \phi_{\sigma(y)} g_y \phi_y^{-1}.$$

Before proving that the function $\phi$ defined above is $\beta$-Holder with respect to the uniform metric, we first prove that we can use also unstable holonomies to define the same $\phi$.

\begin{lemma}
    \label{lem:1}
    Given $x_0$ a fixed point for $\sigma$, we have that
    $$\phi_y=h^{s,f}_{x_0y}h^{s,g}_{yx_0}=h^{u,f}_{x_0y}h^{u,g}_{yx_0}$$ for every $y\in W^s(x_0)\cap W^u(x_0)$.
\end{lemma}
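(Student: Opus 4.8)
The plan is to introduce the ``unstable'' analogue of the conjugacy and show it equals the ``stable'' one. Writing $\phi^{s}$ for the map $\phi$ constructed above from $s$-holonomies, I would set, for every $y$ in the homoclinic class $W^{s}(x_0)\cap W^{u}(x_0)$,
$$\phi^{u}_{y}:=h^{u,f}_{x_0y}\circ h^{u,g}_{yx_0}=h^{u,f}_{x_0y}\circ\bigl(h^{u,g}_{x_0y}\bigr)^{-1}.$$
Property (2) of Definition \ref{def:holonomia} holds verbatim for $u$-holonomies (it says only that the holonomy intertwines the fibre dynamics, and for $u$-holonomies this follows from their definition through $f^{-1}$); since moreover $h_{xx}=\operatorname{id}$ and, $x_0$ being a fixed point, $g_{x_0}=f_{x_0}$, the computation carried out above for $\phi^{s}$ applies word for word with $s$ replaced by $u$. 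It yields that $\phi^{u}$ is again a conjugacy on the homoclinic class, $f_{y}=\phi^{u}_{\sigma(y)}\,g_{y}\,(\phi^{u}_{y})^{-1}$, and that $\phi^{u}_{x_0}=\operatorname{Id}$. With this the lemma reduces to proving $\phi^{s}_{y}=\phi^{u}_{y}$ for every homoclinic $y$.

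I would then compare the two conjugacies by setting $\psi_{y}:=(\phi^{s}_{y})^{-1}\circ\phi^{u}_{y}$. From the two cohomology equations $f_{y}=\phi^{s}_{\sigma(y)}g_{y}(\phi^{s}_{y})^{-1}=\phi^{u}_{\sigma(y)}g_{y}(\phi^{u}_{y})^{-1}$ one gets the equivariance $\psi_{\sigma(y)}=g_{y}\circ\psi_{y}\circ g_{y}^{-1}$, hence $\psi_{y}=(g^{n}_{y})^{-1}\circ\psi_{\sigma^{n}(y)}\circ g^{n}_{y}$ for all $n\ge 0$. So it suffices to show $\psi_{y}=\operatorname{Id}$, which I would extract by letting $n\to\infty$.

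The key step is an estimate with no loss of Hölder exponent. Using $(h^{s}_{xy})^{-1}=h^{s}_{yx}$ one rewrites $\psi_{z}=h^{s,g}_{x_0z}\circ h^{s,f}_{zx_0}\circ h^{u,f}_{x_0z}\circ h^{u,g}_{zx_0}$, a composition of four holonomy maps; by Proposition \ref{prop:stab-manifold-holon} and its $u$-analogue each of them is within $C\,d_\Omega(x_0,z)^{\alpha}$ of the identity in $d_\infty$ (the distance to $\operatorname{Id}$ of a holonomy and of its inverse being the same). For any homeomorphisms $A_1,\dots,A_k$ of $G$, iterating the identity $d_\infty(A_1B,B)=d_\infty(A_1,\operatorname{Id})$ — which is Lemma \ref{lem:d1}(1) applied to the bijection $B$ — yields $d_\infty(A_1\cdots A_k,\operatorname{Id})\le\sum_i d_\infty(A_i,\operatorname{Id})$, with no contraction of exponents. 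Applied to the four factors of $\psi_z$ this gives $d_\infty(\psi_{z},\operatorname{Id})\le C_0\,d_\Omega(x_0,z)^{\alpha}$ with $C_0$ independent of $z$.

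Finally, since $y$ is homoclinic to the fixed point $x_0$ we have $d_\Omega(x_0,\sigma^{n}(y))\le C_y\rho^{-n}$. Combining the iteration formula for $\psi$ with Lemma \ref{lem:d1}(1)--(2),
$$d_\infty(\psi_{y},\operatorname{Id})=d_\infty\bigl((g^{n}_{y})^{-1}\circ\psi_{\sigma^{n}(y)},\,(g^{n}_{y})^{-1}\bigr)\le L\bigl((g^{n}_{y})^{-1}\bigr)\,d_\infty(\psi_{\sigma^{n}(y)},\operatorname{Id}),$$
and $s$-domination of $g$ (Definition \ref{def:dom}) gives $L((g^{n}_{y})^{-1})<\rho^{n(\alpha-\theta)}$. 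Hence $d_\infty(\psi_{y},\operatorname{Id})\le C_0 C_y^{\alpha}\,\rho^{n(\alpha-\theta)}\rho^{-n\alpha}=C_0 C_y^{\alpha}\,\rho^{-n\theta}\to 0$, so $\psi_{y}=\operatorname{Id}$ and $\phi^{s}_{y}=\phi^{u}_{y}$. The point that I expect to require the most care is precisely the exponent bookkeeping in the previous paragraph: the composition estimate for maps close to the identity must be purely additive (no loss below the exponent $\alpha$), so that the contraction rate $\rho^{-n\alpha}$ of the homoclinic approach outruns the growth $\rho^{n(\alpha-\theta)}$ allowed by the domination hypothesis.
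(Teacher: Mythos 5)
Your reduction to showing $\psi_y=(\phi^s_y)^{-1}\phi^u_y=\operatorname{Id}$, the equivariance $\psi_{\sigma(y)}=g_y\,\psi_y\,g_y^{-1}$, and the final domination bookkeeping are fine, but the key estimate $d_\infty(\psi_z,\operatorname{Id})\le C_0\,d_\Omega(x_0,z)^{\alpha}$ fails at the points $z=\sigma^n(y)$ where you need it. Proposition \ref{prop:stab-manifold-holon} gives $d_\infty(h^{u}_{x_0z},\operatorname{Id})\le C\,d_\Omega(x_0,z)^\alpha$ only when $z$ lies in the \emph{local} unstable set of $x_0$ (that is how the telescoping proof of that bound works, and how the paper itself applies it in Lemma \ref{lem:phi-lips}). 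For a homoclinic point $y$, the iterate $\sigma^n(y)$ approaches $x_0$ along the stable direction: it does lie in the global set $W^u(x_0)$, but one must apply $\sigma^{-1}$ roughly $2n$ times before it enters $W^u_{\operatorname{loc}}(x_0)$. Writing, via property (2) of Definition \ref{def:holonomia}, $h^{u,f}_{x_0\,\sigma^n(y)}=f^{2n}_{\sigma^{-n}(y)}\,h^{u,f}_{x_0\,\sigma^{-n}(y)}\,(f^{2n}_{x_0})^{-1}$, the conjugating factor $f^{2n}$ may expand distances by $\rho^{2n(\alpha-\theta)}$, so the two unstable factors of $\psi_{\sigma^n(y)}$ need not approach the identity at all; your additive composition estimate then has nothing to add up.

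A concrete test: take $G=\mathbb{S}^1$ with $f_x$, $g_x$ rotations by H\"older functions $u(x)$, $v(x)$ satisfying $u(x_0)=v(x_0)$. A direct computation gives that $\psi_{\sigma^n(y)}$ is, for every $n$, the rotation by $-\sum_{k\in\mathbb{Z}}\bigl[(v-u)(\sigma^k y)-(v-u)(x_0)\bigr]$; it is constant along the orbit and has no reason to vanish unless the periodic obstructions of $v-u$ vanish. This exposes the structural problem: your argument uses only $f_{x_0}=g_{x_0}$ at the fixed point and never invokes the hypothesis $f^n_{z}=g^n_{z}$ at the other periodic points, yet the lemma is false without it. The paper's proof goes through the Anosov closing lemma precisely to import that hypothesis: both sides are written as $\lim_n (f^{\pm n}_y)^{-1}g^{\pm n}_y$, a periodic point $z_n$ shadowing the homoclinic pseudo-orbit is produced, the identity $(f^{-n}_{z_n})^{-1}g^{-n}_{z_n}=(f^{n}_{z_n})^{-1}g^{n}_{z_n}$ is deduced from $f^{2n}_{\sigma^{-n}(z_n)}=g^{2n}_{\sigma^{-n}(z_n)}$, and the comparison between $y$ and $z_n$ is carried out by telescoping with the domination. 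Some such use of the periodic data is unavoidable, so the proof needs to be redone along those lines.
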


\begin{proof}
    We want to verify that both of the following sequences converge to the same limit with respect to the metric $\dsup$
    \begin{align*}
        h^{s,f}_{x_0y}h^{s,g}_{yx_0} = &\lim_{n\to\infty}(f^n_y)^{-1}f^n_{x_0}(g^n_{x_0})^{-1}g^n_y=\lim_{n\to\infty}(f^n_y)^{-1}g^n_y,\\
        h^{u,f}_{x_0y}h^{u,g}_{yx_0}=&\lim_{n\to\infty}(f^{-n}_y)^{-1}f^{-n}_{x_0}(g^{-n}_{x_0})^{-1}g^{-n}_y=\lim_{n\to\infty}(f^{-n}_y)^{-1}g^{-n}_y.
    \end{align*}
    So we want to verify that
    \begin{equation}
        \label{eq:2}
        \lim_{n\to\infty} \dsup((f^{-n}_y)^{-1}g^{-n}_y,(f^n_y)^{-1}g^n_y) = 0.
    \end{equation}

    Given $y\in W^s(x_0)\cap W^u(x_0)$, we know that there exists $n_0 > 0$ such that $\sigma^{n_0}(y) \in W_{\operatorname{loc}}^s(x_0)$ and $\sigma^{-n_0}(y) \in W_{\operatorname{loc}}^u(x_0)$. This implies that, for $n > n_0$, 
        $$d_{\Omega}(\sigma^{n}(y),x_0) < \rho^{-(n-n_0)}\text{ and }
        d_{\Omega}(\sigma^{-n}(y),x_0) < \rho^{-(n-n_0)}.$$
    Combining this two equations we have that
    \begin{equation}
        \label{eq:n-n}
        d_\Omega(\sigma^{n}(y),\sigma^{-n}(y)) < 2 \rho^{-(n-n_0)}.
    \end{equation}
    
    For all $\epsilon > 0$, there exists $n_1 > n_0 > 0$ such that for every $n > n_1$ the set $\Xi_n = \{\sigma^{-n}(y),\ldots, y,\ldots \sigma^{n}(y)\}$ is an $\epsilon$-pseudo orbit. Then, the Anosov Closing Lemma implies that there are a $\delta > 0$ and a $2n$-periodic point $z_n$ such that $z_n$ $\delta$-shadows the pseudo orbit $\Xi_n$. Additionally, along this shadowing orbit we have some exponential closeness of the distances to the pseudo orbit. More precisely, for all $0 \leq j \leq 2n$, we have
    \begin{equation}
        \label{eq:closing}
        d_\Omega(\sigma^{j}(\sigma^{-n}(y)), \sigma^{j}(\sigma^{-n}(z_n))) < \rho^{-\min\{j, n-j\}} d_\Omega(\sigma^{n}(y),\sigma^{-n}(y)).
    \end{equation}
    
    We can bound the sequence in equation (\ref{eq:2}) with 
    \begin{equation}
        \label{eq:4}
        d_{ \infty}((f^{-n}_y)^{-1}g^{-n}_y,(f^{-n}_{z_n})^{-1}g^{-n}_{z_n})) +d_{ \infty}((f^n_{z_n})^{-1}g^n_{z_n},(f^n_y)^{-1}g^n_y)),
    \end{equation}
    since
        $$(f^{-n}_{z_n})^{-1}g^{-n}_{z_n}=(f^n_{z_n})^{-1}g^n_{z_n},$$
    which holds as a consequence of    
    $$g^n_{z_n} (g^{-n}_{z_n})^{-1} = g^n_{z_n} g^n_{\sigma^{-n}(z_n)} = g^{2n}_{\sigma^{-n}(z_n)}=f^{2n}_{\sigma^{-n}(z_n)} = f^n_{z_n} (f^{-n}_{z_n})^{-1}.$$

    To prove the convergence of equation (\ref{eq:4}) to zero, we establish an upper bound for the second term, $\dsup((f^n_{z_n})^{-1}g^n_{z_n},(f^n_y)^{-1}g^n_y)$, with the first term being analogous.

    The su-domination implies that
    \begin{equation}
        \label{eq:A1}
        \begin{aligned}
        \dsup((f^n_{z_n})^{-1}g^n_{z_n},(f^n_y)^{-1}g^n_y)
        &\leq L((f^n_{z_n})^{-1}) \dsup(g^n_{z_n} (g^n_y)^{-1},f^n_{z_n}(f^n_y)^{-1})\\
        &\leq \rho^{n(1-\theta)} \left( \dsup(g^n_{z_n} (g^n_y)^{-1}, \Id) + \dsup(\Id, f^n_{z_n}(f^n_y)^{-1}) \right).
        \end{aligned}
    \end{equation}

    Again, here we bound $\dsup(f^n_{z_n}(f^n_y)^{-1}, \Id)$, with $\dsup(g^n_{z_n} (g^n_y)^{-1}, \Id)$ being analogous. Using a telescopic sum, the triangle inequality and the su-domination, we have that
    $$\dsup(f^n_{z_n}(f^n_y)^{-1}, \Id) \leq \sum_{j=0}^{n-1} \dsup \left(f^{n-j}_{\sigma^j(z_n)}(f^{n-j}_{\sigma^j(y)})^{-1}, f^{n-j-1}_{\sigma^{j+1}(z_n)}(f^{n-j-1}_{\sigma^{j+1}(y)})^{-1} \right).$$
This expression can be rewritten as 
  $$\sum_{j=0}^{n-1}\dsup \left(f^{n-j-1}_{\sigma^{j+1}(z_n)}f_{\sigma^{j}(z_n)}(f_{\sigma^{j}(y)})^{-1}(f^{n-j-1}_{\sigma^{j+1}(y)})^{-1}, f^{n-j-1}_{\sigma^{j+1}(z_n)}(f^{n-j-1}_{\sigma^{j+1}(y)})^{-1} \right)$$
which is smaller than
\begin{align*}\sum_{j=0}^{n-1}L(f^{n-j-1}_{\sigma^{j+1}(z_n)})& \dsup \left(f_{\sigma^{j}(z_n)}(f_{\sigma^{j}(y)})^{-1}, \Id \right)\\ &\leq
\sum_{j=0}^{n-1} \rho^{(n-j-1)(1-\theta)} \dsup \left(f_{\sigma^{j}(z_n)}(f_{\sigma^{j}(y)})^{-1}, \Id \right).\end{align*}
    For $0 \leq j <n$, we have by equation (\ref{eq:closing}) that
$$\dsup \left(f_{\sigma^{j}(z_n)}(f_{\sigma^{j}(y)})^{-1}, \Id \right) = \dsup(f_{\sigma^{j}(z_n)}, f_{\sigma^{j}(y)})$$
which is bounded by     
$$
\mathcal{H}(f) d_\Omega(\sigma^{j}(z_n), \sigma^{j}(y)) = \mathcal{H}(f) d_\Omega(\sigma^{n+j}(\sigma^{-n}(z_n)), \sigma^{n+j}(\sigma^{-n}(y)))
$$ and using domination this becomes smaller than
 $$\mathcal{H}(f) \rho^{-(n-j)} d_\Omega(\sigma^{n}(y),\sigma^{-n}(y)) 
        \leq 2\mathcal{H}(f) \rho^{-(n-j)} \rho^{-(n-n_0)}.$$   Therefore    \begin{align*}
        \dsup(f^n_{z_n}(f^n_y)^{-1}, \Id) &\leq 2\mathcal{H}(f) \rho^{-(n-n_0)} \sum_{j=0}^{n-1} \rho^{(n-j-1)(1-\theta)} \rho^{-(n-j)}\\
        &\leq 2\mathcal{H}(f) \rho^{-(n-n_0)}.
    \end{align*}

    Going back to equation \ref{eq:A1}, this implies that
    $$\dsup((f^n_{z_n})^{-1}g^n_{z_n},(f^n_y)^{-1}g^n_y) \leq 4\mathcal{H}(f)  \rho^{n(1-\theta)} \rho^{-(n-n_0)} = 4\mathcal{H}(f) \rho^{-n\theta + n_0},$$
    which converges to $0$ as $n$ increases.
\end{proof}

Now we prove that the function $\phi$ defined above, taking points from $W^s(x_0)\cap W^u(x_0)$ to $\Hol{\beta}(G)$, is $\beta$-H\"older with respect to the uniform distance $\dsup$. Since the uniform limit of H\"older continuous functions with bounded H\"older constant is a H\"older function, this implies that $\phi$ has an extension $\phi: \Omega = \overline{W^s(x_0)\cap W^u(x_0)} \to \Hol{\beta}(G)$ that is also $\beta$-H\"older. 

\begin{lemma}
    \label{lem:phi-lips}
    $\phi$ is $\beta$-H\"older in $W^s(x_0)\cap W^u(x_0)$ with respect to the uniform metric $d_\infty$, i. e., there exists $C>0$ satisfying $\dsup(\phi_y, \phi_z) \leq C d_\Omega(y,z)^\beta$ for $y, z \in W^s(x_0)\cap W^u(x_0)$.
\end{lemma}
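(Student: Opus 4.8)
The plan is to estimate $\dsup(\phi_y,\phi_z)$ by splitting into a ``stable'' contribution and an ``unstable'' contribution, exploiting the fact that, by Lemma \ref{lem:1}, $\phi$ can be written using either $s$- or $u$-holonomies. Given $y, z \in W^s(x_0)\cap W^u(x_0)$, set $N = N_{y,z}$, so $d_\Omega(y,z) = \rho^{-N}$ and $y, z$ agree on coordinates $|n| < N$. Pick an intermediate point: there is a natural candidate $w$ obtained by gluing the negative-coordinate tail of $y$ (or $z$) with the positive-coordinate tail — concretely one wants $w \in W^s_{\operatorname{loc}}$-related to one of them and $W^u_{\operatorname{loc}}$-related to the other at scale $\rho^{-N}$. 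Then I would write
\[
\dsup(\phi_y,\phi_z) \leq \dsup(\phi_y,\phi_w) + \dsup(\phi_w,\phi_z),
\]
and bound the first term using the $u$-holonomy representation $\phi = h^{u,f}_{x_0\,\cdot}\,h^{u,g}_{\cdot\,x_0}$ (since $y$ and $w$ are close along the unstable direction) and the second using the $s$-holonomy representation (since $w$ and $z$ are close along the stable direction).

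For each of these two terms the estimate is the same up to swapping $s\leftrightarrow u$ and $f\leftrightarrow g$, so I would carry out one, say $\dsup(\phi_w,\phi_z)$ with $z \in W^s_{\operatorname{loc}}(w)$ at scale $\rho^{-N}$. Using the cocycle relation for the $s$-holonomies one writes $\phi_w = h^{s,f}_{x_0 w} h^{s,g}_{w x_0}$ and $\phi_z = h^{s,f}_{x_0 z} h^{s,g}_{z x_0} = h^{s,f}_{x_0 w} h^{s,f}_{w z} h^{s,g}_{z w} h^{s,g}_{w x_0}$. So $\phi_z = h^{s,f}_{x_0 w}\big(h^{s,f}_{wz} h^{s,g}_{zw}\big) h^{s,g}_{w x_0}$, and $\phi_w = h^{s,f}_{x_0 w}\,\Id\, h^{s,g}_{w x_0}$. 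Now I compare these two: conjugation/composition by the fixed maps $h^{s,f}_{x_0 w}$ and $h^{s,g}_{w x_0}$ is where the Hölder exponents degrade. Using Lemma \ref{lem:d1}(1)--(2) (for $d_1$, hence also controlling $\dsup$, and $\dsup(h\circ a, h\circ b)\le L(h)\dsup(a,b)$ while $\dsup(a\circ h, b\circ h) = \dsup(a,b)$) one gets
\[
\dsup(\phi_w,\phi_z) \leq L(h^{s,f}_{x_0 w})\, \dsup\!\big(h^{s,f}_{wz} h^{s,g}_{zw},\ \Id\big).
\]
Then $\dsup(h^{s,f}_{wz} h^{s,g}_{zw},\Id) \le \dsup(h^{s,f}_{wz}h^{s,g}_{zw}, h^{s,f}_{wz}) + \dsup(h^{s,f}_{wz},\Id) \le L(h^{s,f}_{wz})\dsup(h^{s,g}_{zw},\Id) + \dsup(h^{s,f}_{wz},\Id)$, and the last estimate of Proposition \ref{prop:stab-manifold-holon} gives $\dsup(h^{s,\cdot}_{wz},\Id) \le C\, d_\Omega(w,z)^{\alpha} = C\rho^{-N\alpha}$ (recall $\alpha = 1$ in this part of the proof, but I would keep it general). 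The Lipschitz constants $L(h^{s,f}_{x_0w})$ and $L(h^{s,f}_{wz})$ are uniformly bounded — this is exactly item (4) of Definition \ref{def:holonomia}, the uniform Hölder control of the holonomies, specialised/combined with the fact that $w, z$ (resp. $x_0, w$) lie in a bounded piece of a local stable set; so all of these contribute only a uniform constant. Hence $\dsup(\phi_w,\phi_z) \le C' \rho^{-N\alpha} = C' d_\Omega(y,z)^{\alpha}$, which is even better than the claimed $\beta$-Hölder bound.

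The point where the exponent actually drops from $\alpha$ (or $1$) down to $\beta = \gamma_f\gamma_g$ is the comparison of $\phi_y$ with $\phi_w$ and $\phi_w$ with $\phi_z$ when $w$ is \emph{not} within a fixed bounded piece of a single local stable/unstable set, i.e. one must also account for how the ``base point'' $x_0$ sits relative to $w$: the Hölder constant of $h^{s,f}_{x_0 w}$ as a map of $G$, or rather the way $\phi$ itself inherits regularity, involves composing the $\gamma_f$-Hölder holonomy for $F$ with the $\gamma_g$-Hölder holonomy for $G$, and Hölder exponents multiply under composition (as flagged in the remark after Lemma \ref{lem:d1}). I expect the main obstacle to be bookkeeping this correctly: one must track that $\phi_y = h^{s,f}_{x_0 y} \circ (h^{s,g}_{x_0 y})^{-1}$ is a composition of a $\gamma_f$-Hölder and a $\gamma_g$-Hölder homeomorphism, so it genuinely lives in $\Hol{\beta}(G)$ with $\beta = \gamma_f\gamma_g$, and that the variation estimate respects this — the cleanest way is probably to first reduce to the case $y \in W^u_{\operatorname{loc}}(x_0)$, $z\in W^s_{\operatorname{loc}}(y)$ via the intermediate point, then in each reduced case use the appropriate holonomy representation so that the ``bad'' composition is always between holonomies based at points that are uniformly close, keeping all Lipschitz/Hölder constants uniform and isolating the single $\gamma_f\gamma_g$ loss. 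An alternative, possibly slicker route: prove directly that $y\mapsto h^{s,f}_{x_0 y}$ and $y \mapsto h^{s,g}_{x_0 y}$ are each Hölder in $y$ with values in the respective $\Hol{\gamma}$ spaces (with the $\dsup$ metric on the target), and then observe that $(\phi_1,\phi_2)\mapsto \phi_1\circ\phi_2^{-1}$ from $\Hol{\gamma_f}(G)\times\Hol{\gamma_g}(G) \to \Hol{\beta}(G)$ is locally Lipschitz for $\dsup$ using Lemma \ref{lem:d1} together with uniform Lipschitz bounds on the holonomies — composing these gives the result.
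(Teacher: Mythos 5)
Your overall strategy matches the paper's: insert the local product point $w \in W^s_{\operatorname{loc}}(y)\cap W^u_{\operatorname{loc}}(z)$, split via the triangle inequality, treat one leg with stable holonomies and the other with unstable holonomies via Lemma \ref{lem:1}. But the one computation you actually carry out contains a genuine error, and the corrected version --- which is the whole content of the lemma --- is left as ``bookkeeping''. Concretely, you write $\dsup(\phi_w,\phi_z)\le L(h^{s,f}_{x_0 w})\,\dsup(h^{s,f}_{wz}h^{s,g}_{zw},\Id)$ and assert that $L(h^{s,f}_{x_0w})$, $L(h^{s,f}_{wz})$ are uniformly bounded by item (4) of Definition \ref{def:holonomia}. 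That item only gives a uniform \emph{H\"older} constant and exponent: Proposition \ref{prop:stab-manifold-holon} produces holonomies that are $\gamma$-H\"older with $\gamma<\alpha$, and no Lipschitz bound is available (nor could it be --- if it were, your conclusion $\dsup(\phi_w,\phi_z)\le C'd_\Omega(y,z)^{\alpha}$ would make the exponent $\beta<\alpha^2$ in Theorem \ref{teo:A} unnecessary). Post-composition by a $\gamma$-H\"older map costs $\dsup(h\circ a,h\circ b)\le H(h)\,\dsup(a,b)^{\gamma}$, so your grouping actually yields an exponent of the form $\gamma_f^2$ on this leg rather than the clean $\alpha$ you claim, and it is not the paper's $\beta=\gamma_f\gamma_g$ either.

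The step you defer is exactly where the paper does its work: it splits $\dsup(\phi_y,\phi_w)$ by changing one holonomy at a time, so that one term is $\dsup(h^{s,f}_{yw},\Id)$ (controlled directly by the last estimate of Proposition \ref{prop:stab-manifold-holon}, no post-composition loss) and the other is of the form $\dsup(A\circ B, A)\le H(A)\,\dsup(B,\Id)^{\beta}$ with $A=h^{s,f}_{x_0w}h^{s,g}_{yx_0}$, which is $\beta$-H\"older with uniformly bounded constant precisely because it is the composition of a $\gamma_f$- and a $\gamma_g$-H\"older map, and $B=h^{s,g}_{yw}$ close to the identity. Your closing paragraph correctly identifies this mechanism in words (and your ``alternative slicker route'' is essentially it), but you never execute the estimate, so the proposal as written does not establish the $\beta$-H\"older bound.
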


\begin{proof}
    Since $\Omega$ is compact, we can find the H\"older constant $C>0$ locally for finitely many balls and take the maximum. Thus, it is enough to prove the H\"older inequality for $y, z \in W^s(x)\cap W^u(x)$ with $d_\Omega(y, z) < \varepsilon$ for some fixed $\varepsilon > 0$.

    Consider first $w \in W^s_\delta(y)\cap W^u_\delta(z)$ the local product of $y$ and $z$. Then, for $p \in G$,
        $$\dsup(\phi_y, \phi_z) \leq \dsup(\phi_y, \phi_w) + \dsup(\phi_w, \phi_z).$$
    We prove the H\"older inequality for $\dsup(\phi_y, \phi_w)$, with the second term being analogous by taking in consideration that $h^{s,f}_{x_0y}h^{s,g}_{yx_0}=h^{u,f}_{x_0y}h^{u,g}_{yx_0}$, in Lemma \ref{lem:1}.

    We have that 
    \begin{align*}
        \dsup(\phi_y, \phi_w) &= \dsup(h^{s,f}_{x_0y}h^{s,g}_{yx_0},h^{s,f}_{x_0w}h^{s,g}_{wx_0})\\
        &\leq \dsup(h^{s,f}_{x_0y}h^{s,g}_{yx_0},h^{s,f}_{x_0w} h^{s,g}_{yx_0})+\dsup(h^{s,f}_{x_0w} h^{s,g}_{yx_0},h^{s,f}_{x_0w}h^{s,g}_{wx_0})\\
        &\leq \dsup(h^{s,f}_{x_0y}h^{s,f}_{wx_0},\Id) + H(h^{s,f}_{x_0w} h^{s,g}_{yx_0})\, \dsup(\Id,\,h^{s,g}_{x_0y}h^{s,g}_{wx_0})^\beta,\\
        &= \dsup(h^{s,f}_{yw},\Id) + H(h^{s,f}_{x_0w} h^{s,g}_{yx_0})\, \dsup(\Id,\,h^{s,g}_{yw})^\beta,
    \end{align*}
    remembering that the holonomies $h^{s,f}_{x_0w}$ and $h^{s,g}_{yx_0}$ are H\"older with uniform H\"older constant, then their composition is $\beta$-H\"older and $H(h^{s,f}_{x_0w} h^{s,g}_{yx_0}) < C_1$.
    
    As $y$ and $w$ belong to the same local stable manifold, we have by Proposition \ref{prop:stab-manifold-holon} that 
    $$d_\infty(h^{s,*}_{yw}, \operatorname{Id}) \leq C_0 d_\Omega(y, w),\text{ for } *\in\{f,g\}$$
    which implies
    \begin{align*}
        \dsup(\phi_y, \phi_w) \leq C_0 d_\Omega(y, w) + C_1 C_0^\beta d_\Omega(y, w)^\beta \leq C_2 d_\Omega(y, w)^\beta.
    \end{align*}

    The process is the same if we consider $w \in  W^u_{\delta}(z)$ and use Lemma \ref{lem:1}. This gives us:
    \begin{equation}
        \label{eq:bound-d}
        \dsup(\phi_y, \phi_z) \leq C_2(d_\Omega(y, w)^\beta + d_\Omega(w, z)^\beta),
    \end{equation}
    and there exist $C_3>0$ satisfying $d_{\Omega}(y,w)^\beta + d_{\Omega}(w,z)^\beta < C_3 d_{\Omega}(y,z)^\beta$, since $w\in W^s_\delta(y)\cap W^u_\delta(z)$, which concludes the proof.
\end{proof}

Using that the extension of a H\"older map to the closure of the homoclinic class $W^s(x_0)\cap W^u(x_0)$ is also a H\"older map, we have a conjugacy defined in the whole space that is H\"older with respect to the uniform metric.

\subsection{Case when the shift has a periodic point}	

Consider now $x_0 \in \Omega$ a periodic point for $\sigma$ with period $n_0$. Then $x_0$ is a fixed point for $\tilde{\sigma} = \sigma^{n_0}$ and we can apply the result proven in Section \ref{sec:A-fixed}. Indeed, since both cocycles $\tilde{f} = f^{n_0}_x$ and $\tilde{g} = g^{n_0}_x$ are Lipschitz and su-dominated over $\tilde{\sigma}$, and satisfy the condition that for every $m_0$-periodic point $y_0$  
    $$\tilde{f}^{m_0}_{y_0} = \tilde{g}^{m_0}_{y_0},$$  
it follows that there is a H\"older map $\phi: \Omega \to (\Hol{\beta}(G), \dsup)$ satisfying  
    $$\tilde{f}_{x} = \phi_{\tilde{\sigma}(x)} \tilde{g}_{x} \phi_{x}^{-1},$$  
for all $x \in \Omega$, or equivalently
\begin{equation}  
    \label{eq:cocycle-per}  
    f^{n_0}_{x} = \phi_{\sigma^{n_0}(x)} g^{n_0}_{x} \phi_{x}^{-1},  
\end{equation}  
for all $x \in \Omega$. We are going to prove that this same $\phi$ satisfies the conclusion Theorem \ref{teo:A}. To achieve that we apply the following lemma, with an analogous result holding for the unstable case.

\begin{lemma}
    \label{lem:hol-conj}
    For all $y \in \Omega$ and all $z \in W^{s}(y)$, we have
        $$\phi_z = h^{s,f}_{yz} \phi_y h^{s,g}_{zy}.$$
\end{lemma}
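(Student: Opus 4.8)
The plan is to establish the identity $\phi_z = h^{s,f}_{yz}\,\phi_y\,h^{s,g}_{zy}$ by reducing to the fixed-point situation already handled in Section \ref{sec:A-fixed}, using that $\phi$ there was literally \emph{built} out of holonomies via the formula $\phi_w = h^{s,f}_{x_0w}h^{s,g}_{wx_0}$ (equation \eqref{eq:def-phi} together with Lemma \ref{lem:1}). First I would fix a periodic point $x_0$ of period $n_0$ whose homoclinic class is dense, so that $\tilde\sigma=\sigma^{n_0}$ fixes $x_0$ and $\phi$ is the map produced by Section \ref{sec:A-fixed} for the cocycles $\tilde f, \tilde g$. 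For base points $y,z$ lying in the dense set $W^{s}(x_0)\cap W^{u}(x_0)$ with $z\in W^s(y)$, the composition/naturality axioms (Definition \ref{def:holonomia}(1)) give $h^{s,f}_{x_0z}=h^{s,f}_{yz}h^{s,f}_{x_0y}$ and $h^{s,g}_{zx_0}=h^{s,g}_{yx_0}h^{s,g}_{zy}$; substituting these into $\phi_z=h^{s,f}_{x_0z}h^{s,g}_{zx_0}$ and using $\phi_y=h^{s,f}_{x_0y}h^{s,g}_{yx_0}$ yields exactly
$$\phi_z = h^{s,f}_{yz}\,h^{s,f}_{x_0y}\,h^{s,g}_{yx_0}\,h^{s,g}_{zy} = h^{s,f}_{yz}\,\phi_y\,h^{s,g}_{zy}.$$
Note this step only uses holonomies for the full cocycles $f,g$, not for $\tilde f,\tilde g$; one should check that the $s$-holonomies of $f$ over $\sigma$ restrict/agree with those of $\tilde f=f^{n_0}$ over $\tilde\sigma$ on stable sets, which follows from the uniqueness in Proposition \ref{prop:stab-manifold-holon} (both are limits of $(f^n_\cdot)^{-1}f^n_\cdot$ along the appropriate subsequences).

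Next I would remove the restriction that $y,z$ lie in the homoclinic class, by a density and continuity argument. The set of pairs $(y,z)$ with $z\in W^s(y)$ and both in $W^s(x_0)\cap W^u(x_0)$ is dense in $\{(y,z): z\in W^s(y)\}$; the left-hand side $z\mapsto\phi_z$ is $\beta$-Hölder (hence continuous) on all of $\Omega$ by Lemma \ref{lem:phi-lips} and its extension; and the right-hand side $(y,z)\mapsto h^{s,f}_{yz}\,\phi_y\,h^{s,g}_{zy}$ is continuous in $(y,z,p)$ by Definition \ref{def:holonomia}(3) together with continuity of $\phi$ and joint continuity of composition on $(\Hol{\beta}(G),\dsup)$ acting on $G$. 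Evaluating both sides at an arbitrary $p\in G$ and passing to the limit along a sequence of homoclinic pairs converging to $(y,z)$ gives the identity for all $y$ and all $z\in W^s(y)$.

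The main obstacle I anticipate is the continuity/limiting step in the second paragraph: one must be careful about in what topology the holonomy family is continuous and whether the product $h^{s,f}_{yz}\phi_y h^{s,g}_{zy}$ depends continuously on $(y,z)$ as an element of $\Hol{\beta}(G)$ — a priori only pointwise (i.e. $\dsup$) continuity is guaranteed, since composition is not continuous in the $d_\beta$ metric. This is why I would phrase the whole argument pointwise in $p\in G$, proving $\phi_z(p)=\big(h^{s,f}_{yz}\,\phi_y\,h^{s,g}_{zy}\big)(p)$ for every $p$, which is all that is needed; the algebraic identity of paragraph one already holds as an equality of maps on the dense set, so the pointwise limit transfers it to the closure. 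A secondary technical point is confirming that the $\phi$ constructed in Section \ref{sec:A-fixed} for $\tilde f,\tilde g$ really is expressed by $h^{s,f}_{x_0\cdot}h^{s,g}_{\cdot x_0}$ with the $f,g$-holonomies (not the $\tilde f,\tilde g$-holonomies); once the restriction property of holonomies under passing to a power is recorded, this is immediate.
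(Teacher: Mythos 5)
Your argument is correct in outline, but it takes a genuinely different route from the paper. The paper's proof is a direct limit argument valid for \emph{all} $y\in\Omega$, $z\in W^s(y)$ at once: it iterates the already-established relation $f^{n_0}_{x}=\phi_{\sigma^{n_0}(x)}g^{n_0}_{x}\phi_x^{-1}$ to write $\phi_{\sigma^{kn_0}(y)}=f^{kn_0}_y\phi_y(g^{kn_0}_y)^{-1}$, forms $a_k=(f^{kn_0}_z)^{-1}\phi_{\sigma^{kn_0}(y)}g^{kn_0}_z$, notes that $a_k\to h^{s,f}_{yz}\phi_y h^{s,g}_{zy}$ by Proposition \ref{prop:stab-manifold-holon}, and bounds $d_\infty(\phi_z,a_k)\le L((f^{kn_0}_z)^{-1})\,d_\infty(\phi_{\sigma^{kn_0}(z)},\phi_{\sigma^{kn_0}(y)})$, which tends to zero by combining su-domination with the $\beta$-H\"older continuity of $\phi$. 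You instead verify the identity algebraically on the homoclinic class, where $\phi$ has the explicit holonomy formula, and extend by density and pointwise continuity. Your route buys a cleaner algebraic core (the cocycle property of holonomies does all the work on the dense set) and makes transparent why the statement should be true; the paper's route avoids any density argument and needs no continuity of the holonomy family in the base point, at the cost of the quantitative estimate. Two points in your write-up deserve to be nailed down: (i) the density of homoclinic pairs $(y_n,z_n)$ with $z_n\in W^s(y_n)$ inside $\{(y,z):z\in W^s(y)\}$ is asserted but not constructed — for a transitive subshift one builds $z_n$ by splicing the central block of $z$ onto the forward tail of $y_n$ and an admissible backward connection to the orbit of $x_0$; and (ii) the joint continuity of $(y,z)\mapsto h^{s,f}_{yz}(p)$ is only uniform on \emph{local} stable pairs, so for $z\in W^s(y)\setminus W^s_{\mathrm{loc}}(y)$ you should first reduce to the local case by applying $\sigma^{kn_0}$ and conjugating back with equation \eqref{eq:cocycle-per} and Definition \ref{def:holonomia}(2). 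Your observation that the holonomies of $f^{n_0}$ over $\sigma^{n_0}$ agree with those of $f$ over $\sigma$ (as sublimits of the same convergent sequence) is correct and necessary for your first step.
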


\begin{proof}
    Equation (\ref{eq:cocycle-per}) implies that for every $k \in \mathbb{N}$ and $y\in\Omega$ 
    \begin{equation}
        \label{eq:knzero}
        \phi_{\sigma^{kn_0}(y)} = f^{kn_0}_{y} \phi_{y} (g^{kn_0}_{y})^{-1}.
    \end{equation}
    Let $z\in W^s(y)$ and define the sequence
    \begin{equation}
        \label{eq:lem-hol-conjI}
        a_k := (f^{kn_0}_z)^{-1} \phi_{\sigma^{kn_0}(y)} g^{kn_0}_z = (f^{kn_0}_z)^{-1} f^{kn_0}_{y} \phi_{y} (g^{kn_0}_{y})^{-1} g^{kn_0}_{z}.
    \end{equation}

    Proposition \ref{prop:stab-manifold-holon} implies that the sequence $a_k$ converges to $h^{s,f}_{yz}\phi_yh^{s,g}_{zy}$ in the uniform distance as $k$ increases. Therefore, since our goal is to prove that $\phi_z = h^{s,f}_{yz} \phi_y h^{s,g}_{zy}$, it is enough to verify that $a_k$ converges to $\phi_z$. Considering that equation (\ref{eq:knzero}) also gives that $ \phi_{z} = (f^{kn_0}_{z})^{-1} \phi_{\sigma^{kn_0}(z)} g^{kn_0}_{z}$, then
    \begin{align*}
        d_\infty(\phi_z, a_k) &= d_\infty((f^{kn_0}_{z})^{-1} \phi_{\sigma^{kn_0}(z)} g^{kn_0}_{z}, (f^{kn_0}_z)^{-1} \phi_{\sigma^{kn_0}(y)} g^{kn_0}_z) \\
        &= d_\infty((f^{kn_0}_{z})^{-1} \phi_{\sigma^{kn_0}(z)}, (f^{kn_0}_z)^{-1} \phi_{\sigma^{kn_0}(y)}) \\
        &\leq L((f^{kn_0}_{z})^{-1}) d_\infty(\phi_{\sigma^{kn_0}(z)}, \phi_{\sigma^{kn_0}(y)}).
    \end{align*}

    Since $F$ is su-dominated, we have $L((f^{kn_0}_{z})^{-1}) < \rho^{k n_0(1 - \theta)}$, where $\theta > 0$. Moreover, the fact that $\phi$ is $\beta$-H\"older implies
        $$d_\infty(\phi_{\sigma^{kn_0}(z)}, \phi_{\sigma^{kn_0}(y)}) \leq L(\phi) d_\Omega(\sigma^{kn_0}(z), \sigma^{kn_0}(y))^\beta.$$  

    Without loss of generality, we can assume that $z \in W^s_{\operatorname{loc}}(y)$, which gives us
        $$d_\Omega(\sigma^{kn_0}(z), \sigma^{kn_0}(y))^\beta < \rho^{-\beta kn_0}.$$
        
    For $k$ sufficiently big, we have that $\rho^{k n_0(1 - \theta)} < \rho^{\beta k n_0 (1 - \theta)}$, since $\rho > 1$ and $\beta <1$. Consequently, $d_\infty(\phi_z, a_k) < L(\phi) \rho^{-\beta k n_0 \theta},$
    and this expression converges to $0$ as $k$ increases.  
    
\end{proof}

Now, consider $y\in W = W^s(x_0) \cap W^u(\sigma^{n_0-1}(x_0))$. To conclude the theorem, since $W$ is dense, it is enough to prove that the cohomological equation holds for $y \in W$. Notice that $y \in W^u(\sigma^{n_0-1}(x_0))$ and this implies that $\sigma(y) \in W^u(\sigma^{n_0}({x_0})) = W^u({x_0})$.

To prove that $\phi_y = f_y^{-1} \phi_{\sigma(y)} g_y$, we use Lemma \ref{lem:hol-conj} to derive expressions that relate $\phi_y$ and $\phi_{\sigma(y)}$. Lemma \ref{lem:hol-conj} provides the following equations:  
    $$\phi_y = h^{s,f}_{{x_0}y} \phi_{x_0} h^{s,g}_{y{x_0}} \quad \text{and} \quad \phi_{\sigma(y)} = h^{u,f}_{{x_0}\sigma(y)} \phi_{x_0} h^{u,g}_{\sigma(y){x_0}}.$$  
Since $\phi_{x_0} = \operatorname{Id}$ by definition, these simplify to  
\begin{align*}  
    \phi_y &= h^{s,f}_{{x_0}y} h^{s,g}_{y{x_0}} = \lim_{n \to \infty} (f^{n n_0}_y)^{-1} f^{n n_0}_{x_0} (g^{n n_0}_{x_0})^{-1} g^{n n_0}_y, \\  
    \phi_{\sigma(y)} &= h^{u,f}_{{x_0}\sigma(y)} h^{u,g}_{\sigma(y){x_0}} = \lim_{n \to \infty} (f^{-n n_0}_{\sigma(y)})^{-1} f^{-n n_0}_{x_0} (g^{-n n_0}_{x_0})^{-1} g^{-n n_0}_{\sigma(y)}.  
\end{align*}  

To finish the proof it is sufficient to prove that 
    $$\lim_{n \to \infty} (f^{n n_0}_y)^{-1} g^{n n_0}_y = f_y^{-1} \left( \lim_{n \to \infty} (f^{-n n_0}_{\sigma(y)})^{-1} g^{-n n_0}_{\sigma(y)} \right) g_y = \lim_{n \to \infty} (f^{-n n_0+1}_y)^{-1} g^{-n n_0+1}_y.$$

Fixed $\varepsilon >0$, since $y \in W^s({x_0}) \cap W^u(\sigma^{n_0-1}({x_0}))$ there is $N \in \mathbb{N}$ such that for every $n \geq N$ we have
    $$d(\sigma^{-n n_0 + 1}(y), \sigma^{n n_0}(y)) < \varepsilon.$$
As in the proof of Lemma \ref{lem:1}, let $z_n \in \Omega$ a $\sigma$-periodic point shadowing the $\varepsilon$-pseudo orbit $\{\sigma^{-n n_0 + 1}(y), \cdots, \sigma^{n n_0}(y)\}$. Using that
    $$(f^{-n n_0 +1}_{z_n})^{-1} g^{-n n_0 +1}_{z_n} = (f^{n n_0}_{z_n})^{-1} g^{n n_0}_{z_n},$$
since $z_n$ is $2n n_0 -1$-periodic and 
\begin{align*}
    g^{n n_0}_{z_n} (g^{-n n_0 +1}_{z_n})^{-1} &= g^{n n_0}_{z_n} g^{n n_0 +1}_{\sigma^{-n n_0 +1}(z_n)} = g^{2n n_0 -1}_{\sigma^{-n n_0 +1}(z_n)} \\
    &= f^{2n n_0 -1}_{\sigma^{-n n_0 +1}(z_n)} = f^{n n_0}_{z_n} (f^{-n n_0 +1}_{z_n})^{-1},
\end{align*}
we bound $d_\infty((f^{n n_0}_y)^{-1} g^{n n_0}_y, (f^{-n n_0+1}_y)^{-1} g^{-n n_0+1}_y)$ with
\begin{align*}
    \label{eq:per-lim}
    d_\infty((f^{n n_0}_y)^{-1} g^{n n_0}_y, (f^{n n_0}_{z_n})^{-1} g^{n n_0}_{z_n})+ d_\infty((f^{-n n_0+1}_{z_n})^{-1} g^{-n n_0+1}_{z_n}, (f^{-n n_0+1}_y)^{-1} g^{-n n_0+1}_y),
\end{align*}
where both therms approach to zero as $n$ increases.
This concludes the proof of Theorem \ref{teo:A} also for the case in which the shift map $\sigma$ does not have a fixed point.

\section{Proof of Theorem \ref{teo:B}}
\label{sec:proofB}

Without loss of generality, consider that $f$ and $g$ are Lipschitz, and the holonomies of $F$ are $\gamma$-H\"older, with $\gamma < \alpha$.

Consider $\phi: \Omega \to \Hol{\beta}(G)$ the $\mu$-measurable conjugacy between $F$ and $G$, this means that $\phi$ is measurable with respect to the Borel $\sigma$-algebra on $(\Hol{\beta}(G), \dsup)$ and 
    $$f_{x} = \phi_{\sigma(x)} g_{x} \phi^{-1}_{x},$$
for every $x \in \Omega$.

We first prove that, along stable or unstable manifolds, holonomies satisfy a conjugacy relation with $\phi$ on a $\mu$-full measure set.

\begin{lemma}
    \label{lem:conj-hol}
    There is a set $\Gamma \subseteq \Omega$ with $\mu(\Gamma) = 1$ such that
    $$h^{f,s}_{xy} = \phi_y h^{g,s}_{xy} \phi_x^{-1}$$
    for $x, y \in \Gamma$ and $y \in W^s(x)$. The same holds for unstable holonomies.
\end{lemma}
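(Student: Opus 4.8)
The plan is to exploit ergodicity of $\mu$ together with the fact that both $\phi$ and the holonomies intertwine $F$ and $G$ with the cocycle dynamics. Define, for $x\in\Omega$ and $y\in W^s_{\mathrm{loc}}(x)$, the ``defect'' $D_{xy} := (h^{f,s}_{xy})^{-1}\,\phi_y\,h^{g,s}_{xy}\,\phi_x^{-1}$, an element of $\mathrm{Hom}(G)$; the claim is that $D_{xy}=\Id$ for $x,y$ in a full-measure set. First I would record the algebraic identity that makes this work: since $f_x=\phi_{\sigma(x)}g_x\phi_x^{-1}$ and the holonomies satisfy the invariance $f_y\circ h^{s}_{xy}=h^{s}_{\sigma(x)\sigma(y)}\circ f_x$ (property (2) of Definition \ref{def:holonomia}) for both $F$ and $G$, a direct computation shows $D_{\sigma(x)\sigma(y)} = (h^{f,s}_{\sigma(x)\sigma(y)})^{-1}\phi_{\sigma(y)}h^{g,s}_{\sigma(x)\sigma(y)}\phi_{\sigma(x)}^{-1}$ is conjugate to $D_{xy}$ by $f_x$ — more precisely one gets a clean equivariance relating $D_{xy}$ and $D_{\sigma(x)\sigma(y)}$ under pushing forward by the cocycle.

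Next I would control the size of $D_{xy}$. From Proposition \ref{prop:stab-manifold-holon}, $\dsup(h^{*,s}_{xy},\Id)\le C\,d_\Omega(x,y)^{\alpha}$ for $*\in\{f,g\}$, so along the forward orbit $d_\Omega(\sigma^n x,\sigma^n y)\to 0$ exponentially and the holonomies $h^{*,s}_{\sigma^n x,\sigma^n y}\to\Id$ uniformly. If $\phi$ were continuous we would immediately get $\dsup(D_{\sigma^n x,\sigma^n y},\Id)\to 0$, but $\phi$ is only $\mu$-measurable; this is where Lusin's theorem and the recurrence properties of $\mu$ enter. The strategy: by Lusin, fix a compact set $K$ with $\mu(K)>1-\epsilon$ on which $\phi|_K$ is uniformly continuous (hence $\dsup(\phi_u,\phi_v)$ small when $u,v\in K$ are close). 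By the ergodic theorem / Poincaré recurrence applied to $\sigma$, for $\mu$-a.e. $x$ there is a subsequence $n_k\to\infty$ with $\sigma^{n_k}x\in K$; using the local product structure and full support of $\mu$, one arranges that $\sigma^{n_k}y\in K$ as well for the same subsequence (here one works in a product neighborhood and uses that $W^s_{\mathrm{loc}}(x)$ and the base recurrence can be synchronized, the standard Livšic-measurable-rigidity trick). Along such $n_k$, both $\phi_{\sigma^{n_k}x}$ and $\phi_{\sigma^{n_k}y}$ lie in $K$ and are close, while the holonomies are close to $\Id$, so $\dsup(D_{\sigma^{n_k}x,\sigma^{n_k}y},\Id)\to 0$. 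Then the equivariance from the first step, run backwards, forces $\dsup(D_{xy},\Id)$ to be controlled by $L((f^{n_k}_x)^{-1})$ times something going to $0$ — and here su-domination of $F$ gives $L((f^n_x)^{-1})<\rho^{n(\alpha-\theta)}$, which a priori grows; the resolution is that the holonomy contribution decays like $d_\Omega(\sigma^{n}x,\sigma^n y)^{\alpha}\le\rho^{-n\alpha}$ beating $\rho^{n(\alpha-\theta)}$ only with a loss, so one must be careful to use the $\beta=\beta\gamma$ exponent rather than $\alpha$, exactly as in the exponent bookkeeping of Lemma \ref{lem:hol-conj}; I would push the uniform-continuity estimate on $K$ through with the $\gamma$-Hölder holonomy exponent to absorb the $\rho^{n(\alpha-\theta)}$ factor.

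Finally I would assemble the full-measure set $\Gamma$: the set of $x$ for which the above recurrence-to-$K$ argument runs has measure $>1-\epsilon'$, and intersecting over a sequence $\epsilon'\to 0$ (using a countable family of Lusin compacts) yields $\mu(\Gamma)=1$; on $\Gamma$ the identity $h^{f,s}_{xy}=\phi_y h^{g,s}_{xy}\phi_x^{-1}$ holds for $y\in W^s_{\mathrm{loc}}(x)\cap\Gamma$, and then extends to all $y\in W^s(x)\cap\Gamma$ by the cocycle relation (iterating $\sigma$ forward a bounded number of times to land in the local stable set) together with property (2) of the holonomies. The unstable case is identical with $\sigma^{-1}$ in place of $\sigma$, using u-domination — which follows here from su-domination of $F$. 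I expect the main obstacle to be the synchronization step: ensuring that the \emph{same} subsequence $n_k$ brings both $\sigma^{n_k}x$ and $\sigma^{n_k}y$ into the good Lusin set $K$, which is where full support and the local product structure of $\mu$ are genuinely needed rather than cosmetic, and getting the exponent arithmetic ($\rho^{n(\alpha-\theta)}$ versus the holonomy decay) to close with room to spare.
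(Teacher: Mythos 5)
Your proposal has the right overall shape (write the discrepancy between $h^{f,s}$ and $\phi\,h^{g,s}\,\phi^{-1}$ as something equivariant under the cocycle, push forward to time $n$, use Lusin plus recurrence to control the measurable $\phi$ along a subsequence, pull back), and the synchronization step you worry about is actually the easy part: the paper just takes the Birkhoff set $R$ of points visiting the Lusin compact $K$ with frequency $>1-\varepsilon$; for $\varepsilon<1/2$ any two points of $R$ share infinitely many common visit times, so no local product structure is needed there. The genuine gap is the one you half-notice at the end and do not resolve: the exponent arithmetic does \emph{not} close in your setup. Your equivariance reads $D_{xy}=(f^n_x)^{-1}D_{\sigma^n x\,\sigma^n y}\,f^n_x$, so pulling back costs a factor $L\bigl((f^n_x)^{-1}\bigr)$, which under su-domination alone only satisfies $L((f^n_x)^{-1})<\rho^{n(\alpha-\theta)}$ and grows exponentially. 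Meanwhile the only control you have on the $\phi$-contribution to $d_\infty(D_{\sigma^{n_k}x\,\sigma^{n_k}y},\Id)$ comes from uniform continuity of $\phi$ on the Lusin compact, i.e.\ it tends to $0$ with \emph{no rate}. The product of an exponentially growing factor with a rateless infinitesimal need not converge, so the argument breaks exactly where you feared. This is not a bookkeeping issue with $\beta\gamma$ versus $\alpha$: su-domination of $F$ genuinely cannot absorb this (the paper points to counterexamples in the linear case), and your proof never invokes the hypothesis that is there precisely to fix it, namely the $\mu$-bounded distortion of $G$.

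The repair is to arrange the computation so that the rateless error term is conjugated by iterates of $g$, not of $f$. The paper does this by writing $h^{f,s}_{xy}=\lim_n (f^n_y)^{-1}f^n_x$ and substituting the conjugacy, obtaining
\[
(f^n_y)^{-1}f^n_x=\phi_y\,(g^n_y)^{-1}\,\phi_{\sigma^n(y)}^{-1}\phi_{\sigma^n(x)}\,g^n_x\,\phi_x^{-1},
\]
so that the middle term $\phi_{\sigma^n(y)}^{-1}\phi_{\sigma^n(x)}\to\Id$ (along the Lusin/Birkhoff subsequence, with $H(\phi_{\sigma^n(y)}^{-1})$ bounded there) is amplified only by $H(\phi_y)$ and by $L((g^n_y)^{-1})<K$, which is uniformly bounded by the $\mu$-bounded distortion of $G$. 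Equivalently, in your language: conjugate your defect by $\phi_x$, i.e.\ set $E_{xy}=\phi_x^{-1}D_{xy}\phi_x$, check that $E_{\sigma x\,\sigma y}=g_x E_{xy}\,g_x^{-1}$, and then the pull-back factor is $L((g^n_x)^{-1})\le K$ rather than $\rho^{n(\alpha-\theta)}$. With that single change your argument goes through; without it, it fails.
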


\begin{proof}
    Remember that $h^{f,s}_{xy}=\lim_{n\to\infty} (f_y^n)^{-1}f_x^n$, and then we have that
    \begin{equation*}
        \label{eq:lemB1}
        (f_y^n)^{-1}f_x^n = \phi_{y} (g_{y}^n)^{-1} \phi_{\sigma^n(y)}^{-1} \phi_{\sigma^n(x)} g_{x}^n \phi^{-1}_{x},        
    \end{equation*}
    for all $x, y \in \Omega$. Then
    \begin{equation*}    
        h^{f,s}_{xy}=\lim_{n\to\infty} \phi_{y} (g_{y}^n)^{-1} \phi_{\sigma^n(y)}^{-1} \phi_{\sigma^n(x)} g_{x}^n \phi^{-1}_{x},       
    \end{equation*}
    and it is sufficient to prove that $\lim \phi_{\sigma^n(y)}^{-1} \phi_{\sigma^n(x)} = \operatorname{Id}$ for $\mu$-a.e. $x, y$. Indeed, consider $a_n, b_n \in \Hol{\beta}(G)$ given by
    \begin{align*}
        &a_n = \phi_{y} (g_{y}^n)^{-1} \phi_{\sigma^n(y)}^{-1} \phi_{\sigma^n(x)} g_{x}^n \phi^{-1}_{x}\\
        &b_n = \phi_{y} (g_{y}^n)^{-1} g_{x}^n \phi^{-1}_{x},
    \end{align*}
    our goal is to prove that $\lim d_\infty(a_n, b_n) = d_\infty(h^{f,s}_{xy}, \phi_y h^{g,s}_{xy} \phi_x^{-1}) = 0$ and that $H(\phi_y)$ is bounded. Then
    \begin{align*}
        \dsup(a_n, b_n) &= d_{\infty}(\phi_{y} (g_{y}^n)^{-1} \phi_{\sigma^n(y)}^{-1} \phi_{\sigma^n(x)}, \phi_{y} (g_{y}^n)^{-1})\\
        &\leq H(\phi_{y}) d_{\infty}((g_{y}^n)^{-1} \phi_{\sigma^n(y)}^{-1} \phi_{\sigma^n(x)},  (g_{y}^n)^{-1})^\beta\\
        &\leq H(\phi_{y}) K^\beta d_\infty(\phi_{\sigma^n(y)}^{-1} \phi_{\sigma^n(x)}, Id)^\beta,
    \end{align*}
    for almost every $y \in \Omega$ by the $\mu$-bounded distortion of $G$.

    To prove that $d_\infty(\phi_{\sigma^n(y)}^{-1} \phi_{\sigma^n(x)}, Id)$ converges to zero, consider that
    \begin{align*}
        \dsup(\phi_{\sigma^n(y)}^{-1} \phi_{\sigma^n(x)}, \Id) &= \dsup(\phi_{\sigma^n(y)}^{-1} \phi_{\sigma^n(x)}, \phi_{\sigma^n(y)}^{-1} \phi_{\sigma^n(y)})\\
        &\leq H(\phi_{\sigma^n(y)}^{-1}) d_\infty(\phi_{\sigma^n(x)}, \phi_{\sigma^n(y)})^\beta.
    \end{align*}
As $y \in W^s(x)$, the sequence $d_\Omega(\sigma^n(x), \sigma^n(y))$ converges to $0$.

To conclude the proof, we must show that the H\"older constants \( H(\phi_{\sigma^n(y)}^{-1}) \) are indeed bounded. Since \(\phi\) is measurable, Lusin's Theorem implies that for any \(\varepsilon > 0\), there exists a compact set \( K \subseteq \Omega \) with \(\mu(K) > 1 - \varepsilon\) such that \(\phi\) and \(\phi^{-1}\) are continuous on \(K\). By the Birkhoff ergodic theorem, the set \(R\) of points visiting \(K\) with frequency at least \(1 - \varepsilon\) has full measure. Therefore, up to taking a subsequence, if \(x, y \in R\), both \(H(\phi_{\sigma^n(x)}^{-1})\) and \(H(\phi_{\sigma^n(y)}^{-1})\) are bounded for all \(n \in \mathbb{N}\). Together with the fact that \(d_\infty(\phi_{\sigma^n(x)}, \phi_{\sigma^n(y)})\) converges to zero, this completes the proof.  
\end{proof}

Now we want to prove that there is a constant $C_1 > 0$ such that
$$\dsup(\phi_x, \phi_y) < C_1 d_{\Omega}(x, y)^{\beta \gamma}.$$
We start by using Lemma \ref{lem:conj-hol} to prove this inequality for $x, y \in \Gamma$, $y \in W^s(x)$. In this case, we have $\phi_y = h^{f,s}_{xy} \phi_x h^{g,s}_{yx}$ and
\begin{align*}
    \dsup(\phi_x, \phi_y) &= \dsup(h^{f,s}_{xy} h^{f,s}_{yx} \phi_x, h^{f,s}_{xy} \phi_x h^{g,s}_{yx})\\
    & \leq H(h^{f,s}_{xy}) \dsup(h^{f,s}_{yx} \phi_x, \phi_x h^{g,s}_{yx})^{\gamma}\\
    & \leq H(h^{f,s}_{xy}) \left( \dsup(h^{f,s}_{yx}\phi_x, \phi_x)^\gamma + \dsup(\phi_x, \phi_x h^{g,s}_{yx})^\gamma \right)\\
    & \leq H(h^{f,s}_{xy})(1 + H(\phi_x)) \max\{\dsup(h^{f,s}_{yx},Id)^\gamma, \dsup(h^{g,s}_{yx}, \Id)^{\beta\gamma}\}\\
    & \leq H(h^{f,s}_{xy})(1 + H(\phi_x)) C_2 d_{\Omega}(x, y)^{\beta \gamma},
\end{align*}where $C_2 > 0$ comes from Proposition \ref{prop:stab-manifold-holon}. Define $C_3 = H(h^{f,s}_{xy})(1 + H(\phi_x)) C_2$ as the H\"older constant above. Again, the same holds for $x, y \in \Gamma$ in the same unstable set.

Now consider a rectangle set $R \in \Omega$, that is, $R$ is closed with respect to the operation $[x,y] = W^s(x) \cap W^u(x)$. Then, since $\mu$ is a measure with local product structure, it satisfies
$$\restr{\mu}{R} \ll \mu_{x_0}^u \times \mu_{x_0}^s,$$
where $x_0$ is a point in $R$.

We use this local product structure, together with the fact that $\mu$ has full support, to prove that for $\mu$-almost every $x, y \in R \cap \Gamma$, we have
\begin{equation}
    \label{eq:lip-full-meas}
    \dsup(\phi_x, \phi_y) \leq C_4 d_{\Omega}(x, y)^{\beta \gamma}.  
\end{equation}

Indeed, for every such set $R$, we have that for a.e. local stable leaf $W^s(x_0)$ in $R$ its intersection with $\Gamma$ has full $\mu_{x_0}^s$-measure, and the measure $\mu_{x_0}^s$ has full support, since $\mu$ has full support. Then, given $x, y$ in $R$ with local stable leaves $W^s_{\operatorname{loc}}(x)$, $W^s_{\operatorname{loc}}(y)$ intersecting $\Gamma$ with full probability with respect to its fully supported conditional measure, there is $z \in W^s_{\operatorname{loc}}(x)$ such that $w = W^u_{\operatorname{loc}}(z) \cap W^s_{\operatorname{loc}}(y)$ belongs to $R \cap \Gamma$. For the pairs $(x, z)$, $(z, w)$ and $(w, y)$ we can apply the previous H\"older inequality with H\"older constant $C_3 > 0$. Since we can find $C_5 > 0$ such that $d_\Omega(x, z)^{\beta \gamma} + d_\Omega(z, w)^{\beta \gamma} + d_\Omega(w, y)^{\beta \gamma} \leq C_5 d_\Omega(x, y)^{\beta \gamma}$, the inequality \ref{eq:lip-full-meas} follows.

Now, consider
$$\tilde\Gamma = \bigcap_{n \in \mathbb{N}} f^n(\Gamma_0),$$
where $\Gamma_0 \subseteq \Gamma$ is the full measure set satisfying the inequality \ref{eq:lip-full-meas}. This is n $\sigma$-invariant and dense set satisfying the cohomological equation
$$f_{x} = \phi_{\sigma(x)} g_{x} \phi^{-1}_{x}$$
for $x \in \tilde\Gamma$, with $\phi$ being H\"older with respect to the uniform metric in $\tilde\Gamma$. Then $\phi$ coincides a.e. with a H\"older function $\tilde\phi: \Omega \to (\operatorname{Hom}(G), \dsup)$. Notice that, since each $\phi_y$ is a H\"older continuous function, but unlike Theorem \ref{teo:A}, we have no uniform bound for its H\"older constant, the function $\tilde\phi$ takes value on the set $\operatorname{Hom}(G)$.

\bibliographystyle{amsplain}
\bibliography{references} 	

\providecommand{\bysame}{\leavevmode\hbox to3em{\hrulefill}\thinspace}
\providecommand{\MR}{\relax\ifhmode\unskip\space\fi MR }
\providecommand{\MRhref}[2]{%
  \href{http://www.ams.org/mathscinet-getitem?mr=#1}{#2}
}
\providecommand{\href}[2]{#2}
\begin{thebibliography}{10}

\bibitem{AKL18}
Artur Avila, Alejandro Kocsard, and Xiao-Chuan Liu, \emph{Liv\v{s}ic theorem
  for diffeomorphism cocycles}, Geom. Funct. Anal. \textbf{28} (2018), no.~4,
  943--964. \MR{3820435}

\bibitem{AV2010}
Artur Avila and Marcelo Viana, \emph{Extremal {L}yapunov exponents: an
  invariance principle and applications}, Invent. Math. \textbf{181} (2010),
  no.~1, 115--189. \MR{2651382}

\bibitem{Backes2015}
Lucas Backes, \emph{Rigidity of fiber bunched cocycles}, Bull. Braz. Math. Soc.
  (N.S.) \textbf{46} (2015), no.~2, 163--179. \MR{3448941}

\bibitem{BackesKocsard}
Lucas Backes and Alejandro Kocsard, \emph{Cohomology of dominated
  diffeomorphism-valued cocycles over hyperbolic systems}, Ergodic Theory
  Dynam. Systems \textbf{36} (2016), no.~6, 1703--1722. \MR{3530463}

\bibitem{BP19}
Lucas Backes and Mauricio Poletti, \emph{A {L}iv\v{s}ic theorem for matrix
  cocycles over non-uniformly hyperbolic systems}, J. Dynam. Differential
  Equations \textbf{31} (2019), no.~4, 1825--1838. \MR{4028555}

\bibitem{BN98}
Hari Bercovici and Viorel Ni\c{t}ic\u{a}, \emph{A {B}anach algebra version of
  the {L}ivsic theorem}, Discrete Contin. Dynam. Systems \textbf{4} (1998),
  no.~3, 523--534. \MR{1612768}

\bibitem{bowen70}
Rufus Bowen, \emph{Markov partitions for {A}xiom {${\rm A}$} diffeomorphisms},
  Amer. J. Math. \textbf{92} (1970), 725--747. \MR{277003}

\bibitem{ddLW2010}
Rafael de~la Llave and Alistair Windsor, \emph{Liv\v{s}ic theorems for
  non-commutative groups including diffeomorphism groups and results on the
  existence of conformal structures for {A}nosov systems}, Ergodic Theory
  Dynam. Systems \textbf{30} (2010), no.~4, 1055--1100. \MR{2669410}

\bibitem{HPS}
Morris Hirsch, Charles Pugh, and Michael Shub, \emph{Invariant manifolds},
  Lecture Notes in Mathematics, Vol. 583, Springer-Verlag, Berlin-New York,
  1977. \MR{501173}

\bibitem{Kalinin2011}
Boris Kalinin, \emph{Liv\v{s}ic theorem for matrix cocycles}, Ann. of Math. (2)
  \textbf{173} (2011), no.~2, 1025--1042. \MR{2776369}

\bibitem{Kalinin-Sadovskaya}
Boris Kalinin and Victoria Sadovskaya, \emph{Linear cocycles over hyperbolic
  systems and criteria of conformality}, J. Mod. Dyn. \textbf{4} (2010), no.~3,
  419--441. \MR{2729329}

\bibitem{KP16}
Alejandro Kocsard and Rafael Potrie, \emph{Liv\v{s}ic theorem for
  low-dimensional diffeomorphism cocycles}, Comment. Math. Helv. \textbf{91}
  (2016), no.~1, 39--64. \MR{3471936}

\bibitem{Liv71}
Alexander~N. Liv\v{s}ic, \emph{Certain properties of the homology of
  {$Y$}-systems}, Mat. Zametki \textbf{10} (1971), 555--564. \MR{293669}

\bibitem{Liv72}
\bysame, \emph{Cohomology of dynamical systems}, Izv. Akad. Nauk SSSR Ser. Mat.
  \textbf{36} (1972), 1296--1320. \MR{334287}

\bibitem{NT95}
Viorel Ni\c{t}ic\u{a} and Andrei T\"{o}r\"{o}k, \emph{Cohomology of dynamical
  systems and rigidity of partially hyperbolic actions of higher-rank
  lattices}, Duke Math. J. \textbf{79} (1995), no.~3, 751--810. \MR{1355183}

\bibitem{PollicottWalkden2001}
Mark Pollicott and Charles Walkden, \emph{Liv\v{s}ic theorems for connected
  {L}ie groups}, Trans. Amer. Math. Soc. \textbf{353} (2001), no.~7,
  2879--2895. \MR{1828477}

\bibitem{Sadovskaya}
Victoria Sadovskaya, \emph{Cohomology of fiber bunched cocycles over hyperbolic
  systems}, Ergodic Theory Dynam. Systems \textbf{35} (2015), no.~8,
  2669--2688. \MR{3456611}

\bibitem{S99}
Klaus Schmidt, \emph{Remarks on {L}iv\v{s}ic' theory for nonabelian cocycles},
  Ergodic Theory Dynam. Systems \textbf{19} (1999), no.~3, 703--721.
  \MR{1695917}

\bibitem{Wilk13}
Amie Wilkinson, \emph{The cohomological equation for partially hyperbolic
  diffeomorphisms}, Ast\'{e}risque (2013), no.~358, 75--165. \MR{3203217}

\bibitem{ZC2021}
Rui Zou and Yongluo Cao, \emph{Liv\v{s}ic theorems for {B}anach cocycles:
  existence and regularity}, J. Funct. Anal. \textbf{280} (2021), no.~5, Paper
  No. 108889, 37. \MR{4186661}

\end{thebibliography}
\end{document}